\def\constr#1^#2{\mathrel{\mathop{\kern 0pt#1}\limits^{#2}}}
\def\build#1_#2{\mathrel{\mathop{\kern 0pt#1}\limits_{#2}}}
\theoremstyle{plain}
\newtheorem{theorem}{Theorem}[section]
\newtheorem{lemma}{Lemma}[section]
\theoremstyle{definition}
\newtheorem{definition}{Definition}[section]
\theoremstyle{remark}
\newtheorem{remark}{\bf Remark}[section]
\theoremstyle{remark}
\newtheorem{remarks}{\bf Remarks}[section]
\theoremstyle{remark}
\newtheorem{exam}{\bf Example}[section]
\numberwithin{equation}{section}
\newcommand{\nsubseteq}{\nsubseteq}
\newcommand{\ds}{\displaystyle}
\newcommand{\be}{\begin{equation}}
\newcommand{\ee}{\end{equation}}
\begin{document}
\title[Delayed abstract thermoelastic systems]
{Lack of differentiability of semigroups associated to delayed abstract thermoelastic systems}
\thispagestyle{empty}

\author{Ka\"{\i}s Ammari}
\address{LR Analysis and Control of PDEs, LR 22ES03,
Department of Mathematics, Faculty of Sciences of Mona-
stir, University of Monastir, 5019 Monastir, Tunisia}
\email{kais.ammari@fsm.rnu.tn}

\author{Makrem Salhi}
\address{
LR Analysis and Control of PDEs, LR 22ES03 (FSM-UM) $\&$ Department of Mathematics and Computer Science,
Preparatory Institute for Engineering Studies of Sfax, University of Sfax, Tunisia}
\email{makram.salhi@ipeis.usf.tn}

\author{Farhat Shel}
\address{
LR Analysis and Control of PDEs, LR 22ES03 (FSM-UM) $\&$ Department of Mathematics and Computer Science,
Preparatory Institute for Engineering Studies of Sfax, University of Sfax, Tunisia}
\email{farhat.shel@fsm.rnu.tn}

\date{}

\begin{abstract} In this paper, we consider the associated semigroups to some abstract thermoelastic systems (in particular the $\alpha$-$\beta$ system), with a partial delay on the coupled system. We will prove that the corresponding semigroups (in appropriate Hilbert spaces) are not differentiable.
\end{abstract}

\keywords{Coupled system, Thermoelastic system, Kelvin-voigt damping, delayed system, differentiability, immediately norm-continuity}
\subjclass[2020] {35B65, 47D06, 45K90, 35L90}

\maketitle

\tableofcontents

\section{Introduction}

Let $H$ be a Hilbert space equipped with an inner product
$(.,.)_{H}$ and an induced norm $\|\;\|_{H}$.
We consider the following two abstract thermoelastic systems with delay. In the first, the delay appear at the hyperbolic part:
 \begin{equation}\label{s0}
\left\{
\begin{array}{ll}
u''(t)+ Au(t-\tau)+aAu'(t)-A^{\beta}\theta(t)=0, & \quad t\in(0,+\infty), \\
\theta'(t)+A^{\alpha}\theta(t)+A^{\beta}u'(t)=0, & \quad t\in(0,+\infty), \\
u(0)=u_{0}, u'(0)=u_{1}, \theta(0)=\theta_{0}, &  \\
\ds A^{\frac{1}{2}}u(t-\tau)=\phi(t-\tau), & \quad t\in(0,\tau),
\end{array}
\right.
\end{equation}
and in the second, the delay appear at the parabolic part:
\begin{equation}\label{s1}
\left\{
\begin{array}{ll}
u''(t)+Au(t)-A^{\beta}\theta(t)=0, & \quad t\in(0,+\infty), \\
\theta'(t)+\kappa A^\alpha\theta(t-\tau)+aA^\alpha\theta(t)
+A^{\beta}u'(t)=0, & \quad t\in(0,+\infty), \\
u(0)=u_{0}, u'(0)=u_{1}, \theta(0)=\theta_{0}, &  \\
\ds A^{\frac{\alpha}{2}} 
\theta (t-\tau) = \psi (t-\tau), & \quad t \in (0,\tau).
\end{array}
\right.
\end{equation}
where $A:D(A)\subset H\rightarrow H$ is a self-adjoint, positive definite (unbounded) operator; $\tau, a, \kappa$ are positive real numbers and $(\alpha,\beta)$ in the region $Q$ defined as follows
$$\fbox{$ Q:=\{(\alpha,\beta)\in[0,1]\times [0,1]\mid 2\beta-\alpha\leq 1\}$}.$$

We suppose that $A$  admits a sequence of eigenvalues $\mu_n$ such that
$$\lim_{n\rightarrow \infty} \mu_n =\infty.$$

In  \cite{ASS24} we introduced the auxiliary variable $z$ defined by
$$\ds z(\rho,t)=A^{\frac{1}{2}}u(t-\tau\rho),\quad\rho\in(0,1), t>0,$$
for system (\ref{s0}) and 
$$\ds z(\rho,t)=A^{\frac{\alpha}{2}}u(t-\tau\rho),\quad\rho\in(0,1), t>0,$$ for  system (\ref{s1}).

\medskip

Define
$$U=(u,u',\theta,z)^{\top},$$
then, problem (\ref{s0}) can be formulated as a first order system of the form

\begin{equation}\label{s7}
\left\{
\begin{array}{ll}
U^\prime (t)=\mathcal{A}_{\alpha,\beta}U(t), \, t > 0, & \\
\ds U(0)=\big(u_{0},u_{1},\theta_{0},\phi( - \tau\cdot)\big)^{\top},
\end{array}
\right.
\end{equation}
where the operator $\mathcal{A}_{\alpha,\beta}$ is defined by
$$\mathcal{A}_{\alpha,\beta}
\left(
\begin{array}{c}
u \\
v \\
\theta \\
z \\
\end{array}
\right)=
\left(
\begin{array}{c}
v \\
\ds-A^{\frac{1}{2}}\big(z(1)+aA^{\frac{1}{2}}v-A^{\beta-\frac{1}{2}}\theta\big) \\
-A^{\frac{\alpha}{2}}\left( A^{\frac{\alpha}{2}}\theta+A^{\beta-\frac{\alpha}{2}}v\right)  \\
\ds-\frac{1}{\tau}z_{\rho} \\
  \end{array}
\right),$$
with domain
$$D(\mathcal{A}_{\alpha,\beta})=\left\{
\begin{array}{c}
(u,v,\theta,z)^{\top}\in D(A^{\frac{1}{2}})\times D(A^{\frac{1}{2}})
\times D(A^{\frac{\alpha}{2}})\times H^{1}\big((0,1),H\big): \;  z(0)=A^{\frac{1}{2}}u, \\
\ds z(1)+aA^{\frac{1}{2}}v-A^{\beta-\frac{1}{2}}\theta \in D(A^\frac{1}{2}), \quad\textrm{ and }\quad A^{\frac{\alpha}{2}}\theta+A^{\beta-\frac{\alpha}{2}}v\in D(A^{\frac{\alpha}{2}})
\end{array}
\right\},$$
in the Hilbert space
$$\mathcal{H}=D(A^{\frac{1}{2}})\times H\times H\times L^{2}\big((0,1),H\big),$$
equipped with the scalar product
$$\ds\big((u,v,\theta,z)^{\top},(u_1,v_1,\theta_1,z_1)^{\top}\big)_{\mathcal{H}}
=\big(A^{\frac{1}{2}}u,A^{\frac{1}{2}}u_1\big)_{H}
+(v,v_1)_{H}+(\theta,\theta_1)_{H}+\xi\int_{0}^{1}(z,z_1)_{H}d\rho,$$
for some parameter $\xi>0$. 

\medskip

Similarly, define
$$U=(u,u',\theta,z)^{\top},$$
then, problem (\ref{s1}) can be formulated as a first order system of the form

\begin{equation}\label{s471}
\left\{
\begin{array}{ll}
U^\prime (t) =\mathcal{A}_{\alpha,\beta}U(t),\, t > 0, & \\
\ds U(0)=\big(x_{0},x_{1},\theta_{0},\psi(-\tau \cdot)\big)^{\top},
\end{array}
\right.
\end{equation}
where the operator $\mathcal{A}_{\alpha,\beta}$ is defined by
$$\mathcal{A}_{\alpha,\beta}
\left(
\begin{array}{c}
u \\
v \\
\theta \\
z \\
\end{array}
\right)= \left(
\begin{array}{c}
v \\
-A^{\frac{1}{2}}\left(A^{\frac{1}{2}} u-A^{\beta-\frac{1}{2}}\theta\right)  \\
\ds-\kappa A^{\frac{\alpha}{2}}\big(z(1)+\frac{a}
{\kappa}A^{\frac{\alpha}{2}}\theta+\frac{1}{\kappa} A^{\beta-\frac{\alpha}{2}}v\big) \\
\ds-\frac{1}{\tau}z_{\rho} \\
  \end{array}
\right),$$
with domain
$$D(\mathcal{A}_{\alpha,\beta})=\left\{
\begin{array}{c}
(u,v,\theta,z)^{\top}\in D(A^{\frac{1}{2}})\times D(A^{\frac{1}{2}})
\times D(A^{\frac{\alpha}{2}})\times H^{1}\big((0,1),H\big): \; z(0)=A^{\frac{\alpha}{2}}\theta, \\
\ds  A^{\frac{1}{2}} u-A^{\beta-\frac{1}{2}}\theta\in D(A^{\frac{1}{2}}), \quad\textrm{ and }\quad z(1)+\frac{a}
{\kappa}A^{\frac{\alpha}{2}}\theta+\frac{1}{\kappa} A^{\beta-\frac{\alpha}{2}}v\in D(A^{\frac{\alpha}{2}})
\end{array}
\right\},$$
in the Hilbert space
$$\mathcal{H}=D(A^{\frac{1}{2}})\times H\times H\times L^{2}\big((0,1),H\big),$$
equipped with the scalar product
$$\ds\big((u,v,\theta,z)^\top,(u_1,v_1,\theta_1,z_1)^\top\big)_{\mathcal{H}}
=\big(A^{\frac{1}{2}}u,A^{\frac{1}{2}}u_1\big)_{H}
+(v,v_1)_{H}+(\theta,\theta_1)_{H}+\xi\int_{0}^{1}(z,z_1)_{H}d\rho,$$
for some parameter $\xi>0$. 

\medskip

 In \cite{ASS24}, we proved that in the two cases the operator $\mathcal{A}_{\alpha,\beta}$ generates a $\mathcal{C}_0$-semigroup $e^{\mathcal{A}_{\alpha,\beta}t}$ and in particular, systems (\ref{s7}) and (\ref{s471}) are well-posed. Moreover, stability or regularity of the solution  $U(t)=e^{\mathcal{A}_{\alpha,\beta}t} U_0,\;t\geq 0$ correspond to stability and regularity of the corresponding semigroup.

 \medskip
 
 The stability of the $\mathcal{C}_0$-semigroup $e^{\mathcal{A}_{\alpha,\beta}t}$ was considered in the two cases in \cite{ASS24}. In this work we try to complete the study of qualitative properties of such semigroup by considering its regularity.  

\begin{figure}[th]
\centering
\includegraphics[width=7cm, keepaspectratio =true]{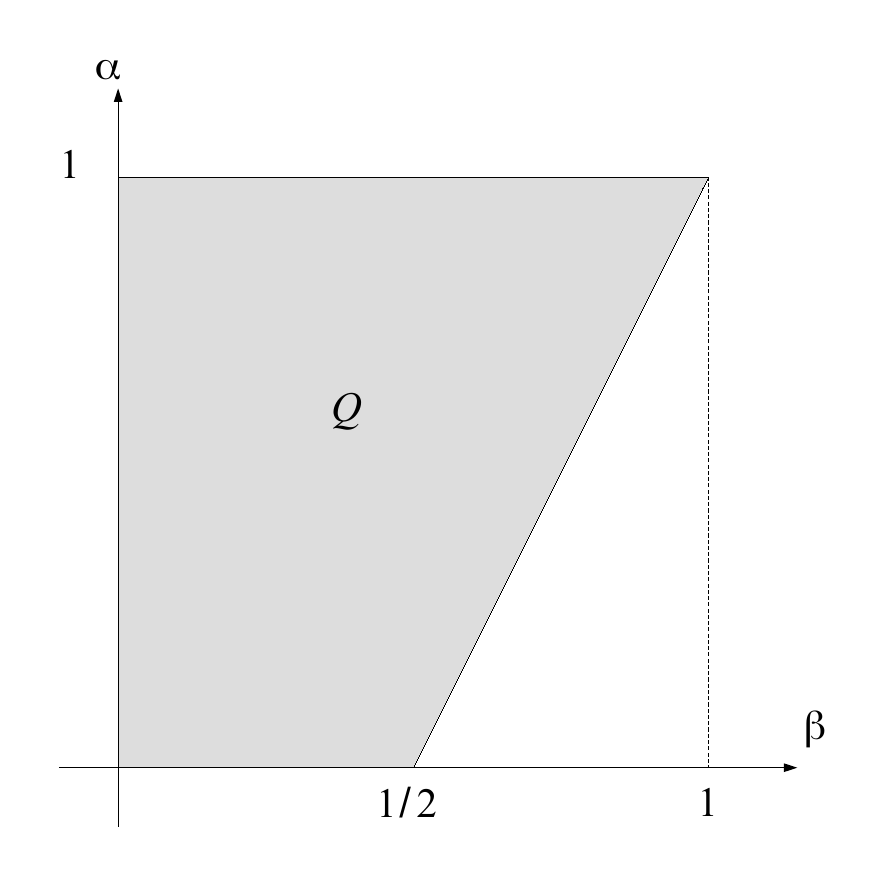}
\caption{Region $Q$}
\label{QQ}
\end{figure}

\medskip

Before starting, let us recall some definitions and characteristic properties about regularity of semigroups.

\begin{definition}
 Let $T(t):=e^{t\mathcal{A}}$ be a $C_0$-semigroup on a Hilbert space $\mathcal{H}_1$.
\begin{enumerate}
\item The semigroup $T(t)$ is said to be \textit{analytic} if
\begin{itemize}
\item for some $\varphi \in (0,\frac{\pi}{2})$, $T(t)$  can be extended to $\Sigma_\varphi$, where
$$\Sigma_\varphi=\{0\}\cup \{\tau \in \mathbb{C} \;:\;|\mathrm{arg}(\tau)|<\varphi \},$$
so that for any $x\in \mathcal{H}_1$, $t\mapsto T(t)x$ is continuous on $\Sigma_\varphi$, and for each $t_1,t_2 \in \Sigma_\varphi$, $T(t_1+t_2)=T(t_1)T(t_2)$. 
\item The map $t \mapsto T(t)$ is analytic over $\Sigma_\varphi\setminus \{0\}$, in the sense of the uniform operator topology of $\mathcal{L}(\mathcal{H}_1)$.
\end{itemize}

\item The semigroup $e^{t\mathcal{A}}$ is said to be of \textit{Gevrey} class $\delta$ (with $\delta >1$) if $e^{t\mathcal{A}}$ is infinitely differentiable and for any compact subset $\mathcal{K}\subset (0,\infty)$ and any $\lambda>0$
, there exists a constant $C=C(\lambda,\mathcal{K})$ such that
$$\|\mathcal{A}^ne^{t\mathcal{A}}\|_{\mathcal{L}(\mathcal{H}_1)}\leq C\lambda^n(n!)^\delta,\;\;\;\;\forall\;t\in\mathcal{K},\;n\geq 0.$$
\item The semigroup $e^{t\mathcal{A}}$ is said to be \textit{differentiable} if for any $x\in\mathcal{H}_1$, $t\mapsto e^{t\mathcal{A}}x$ is differentiable on $(0,\infty)$.
\end{enumerate}
\end{definition}

Well-posedness and stability of delayed PDEs (in infinite dimension) are considered along these years by many researchers \cite{Benhassi, ANbook, DLP86, Dat88, ADBB93, Dat97, BaPi05, NiPi06, JDM08, DQR09, Gug10, Rac12, ANP13}, see also \cite{ANP15, ASS24} and some references therein. However we found fewer studies about regularity properties of such type of PDEs, we can cite here \cite{BaPi01, Her02, HeVa03, Bat04, BaPi05}.

\medskip 

Returning to system (\ref{s0}) or (\ref{s1}). The case $(\tau=0,\; a=0)$: no delay and no damping term was first considered for regularity  in \cite{RiRa96} by Rivera and Racke. They proved that the associated semigroup of the $\alpha-\beta$ system, without delay and damping terms, is $\mathcal{C}^\infty$ in the region
$$\mathring{S}:=\{(\beta,\alpha)\in[0,1]\times [0,1]\mid |2\beta-1|<\alpha< 2\beta\}.$$ Later on, Liu and Yong \cite{LiYo98} proved the analyticity of such semigroup in region $R_1$, and of Gevrey class $\delta>\frac{1}{2(2\beta-\alpha)}$ in the region $R_2$ (the region $R_1$ and $R_2$ will be defined below). Then,  in  \cite{HLY15}, Hao, Liu and Yong complete the regularity analysis. Furthermore they gave a summary about stability and regularity of the $\alpha-\beta$ system for $(\beta,\alpha)$ in $[0,1]\times [0,1]$. For this, we need the following two partitions of the unit square $[0,1]\times [0,1]$ (Figure \ref{A1} and Figure \ref{A2}):
\begin{equation*}
\left\{
\begin{array}{lll}
S &= &\{(\beta,\alpha)\in[0,1]\times [0,1]\mid |2\beta-1|\leq\alpha\leq 2\beta\}, \\
S_1&= &\{(\beta,\alpha)\in[0,1]\times [0,1]\mid 2\beta<\alpha,\;\frac{1}{2}<\alpha\}, \\
S_2&= &\{(\beta,\alpha)\in[0,1]\times [0,1]\mid \alpha<1-2\beta,\;\alpha\leq\frac{1}{2}\},\\
S_3&= &\{(\beta,\alpha)\in[0,1]\times [0,1]\mid \alpha<2\beta-1\}
\end{array}
\right.
\end{equation*}
and 
\begin{equation*}
\left\{
\begin{array}{lll}
R_1 &= &\{(\beta,\alpha)\in[0,1]\times [0,1]\mid \beta\leq\alpha\leq 2\beta-\frac{1}{2}\}, \\
R_2&= &\{(\beta,\alpha)\in[0,1]\times [0,1]\mid (2\beta-\frac{1}{2})<\alpha,\;\frac{1}{2}<\alpha\;\text{and}\;\alpha<2\beta\}, \\
R_3&= &\{(\beta,\alpha)\in[0,1]\times [0,1]\mid 0\leq1-2\beta<\alpha\leq\frac{1}{2},\;(\beta,\alpha)\neq(\frac{1}{2},\frac{1}{2})\}, \\
R_4&= &\{(\beta,\alpha)\in[0,1]\times [0,1]\mid 0<2\beta-1\leq\alpha<\beta\},\\
R_5&= & \{(\beta,\alpha)\in[0,1]\times [0,1]\mid 0<\alpha<2\beta-1\}= S_3\setminus S_I,\\
S_I&= &\{0\}\times(\frac{1}{2},1] .
\end{array}
\right.
\end{equation*}
\begin{figure}[tbp]
\centering
\begin{minipage}[t]{7cm}
\centering
\includegraphics[width=7cm, keepaspectratio =true]{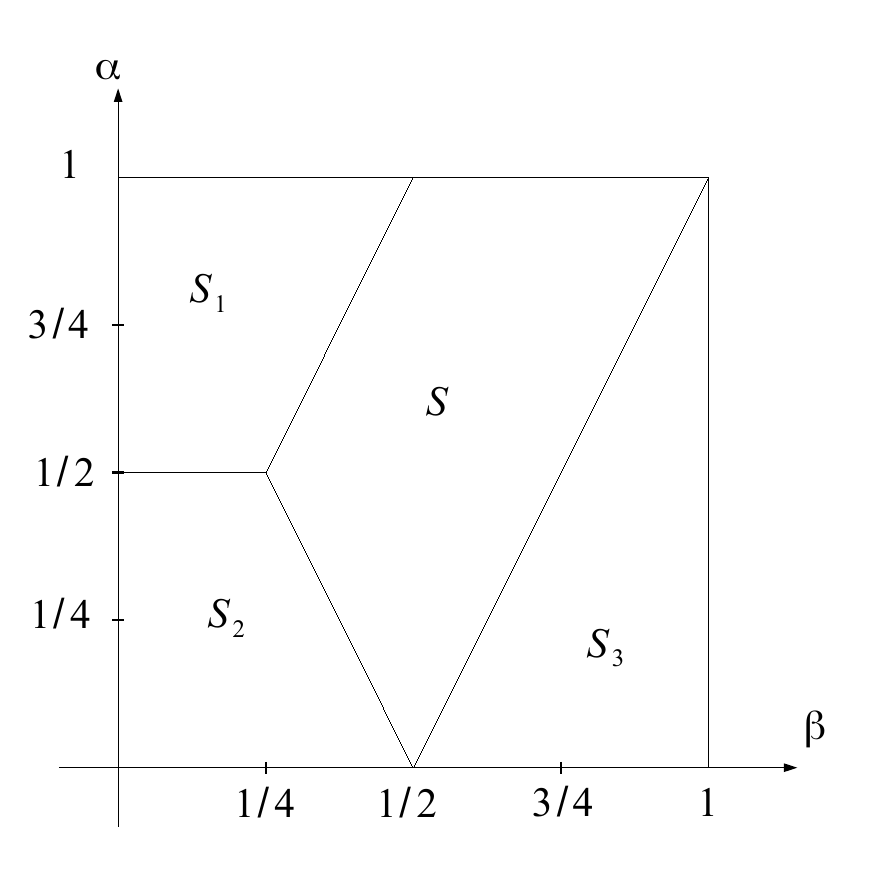}
\caption{Region of stability (without delay)}
\label{A1}
\hfill
\end{minipage}
\begin{minipage}[t]{7cm}
\centering
\includegraphics[width=7cm, keepaspectratio =true]{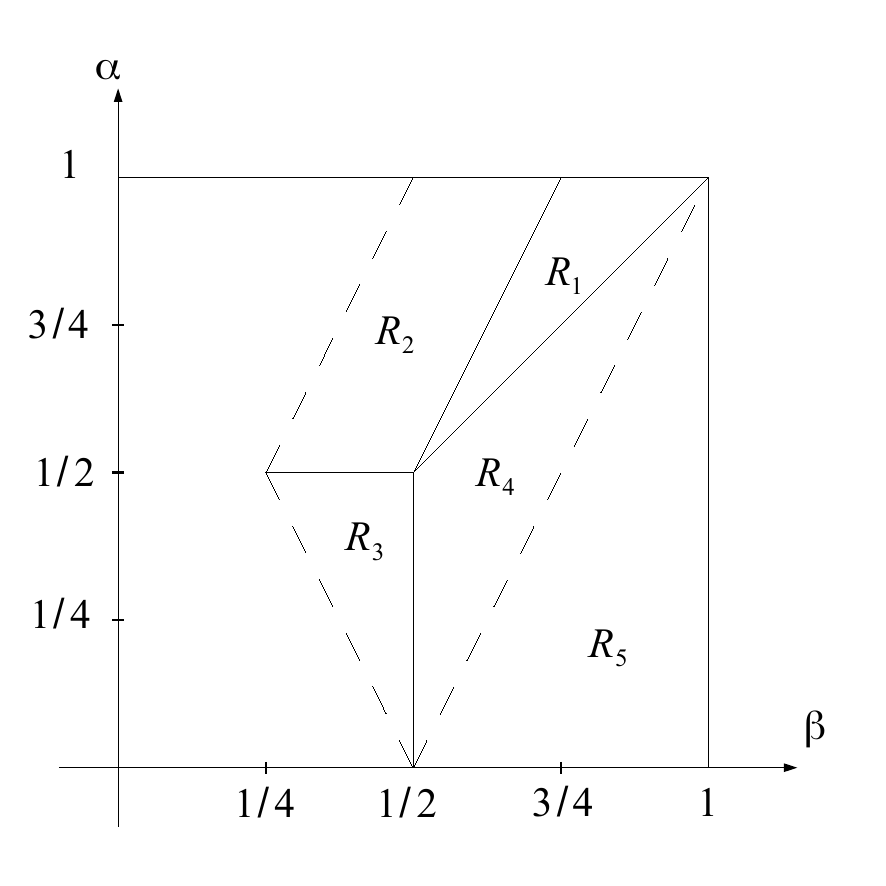}
\caption{Region of regularity (
without delay)}
\label{A2}
\end{minipage}
\end{figure}
\medskip

The table below about qualitative properties of an $\alpha-\beta$-system can be found in \cite{HLY15}:

\begin{tabular}{lll}
\\
\hline

Regions & Regularity & Stability\\

\hline
\\
$R_1$ & analytic & exponentially stable\\
$R_2$ &Gevrey class $\delta>\frac{1}{2(2\beta-\alpha)}$ & exponentially stable\\
$R_3$ &Gevrey class $\delta>\frac{1}{2(2\beta+\alpha)-2}$ & exponentially stable\\
$R_4$ &Gevrey class $\delta>\frac{\beta}{\alpha}$ & exponentially stable\\
$R_5$ &Gevrey class $\delta>\frac{\beta}{\alpha}$ & not asymtotically stable\\
$S_I$ &not differentiable & not asymtotically stable\\
$\overline{(S_1\cup S_2)}\cap S  $ &not differentiable & exponentially stable\\
$S_1$ &not differentiable & polynomially stable of order $\frac{1}{2(\alpha-2\beta)}$\\
$S_2$ &not differentiable & polynomially stable of order $\frac{1}{2-2(2\beta+\alpha)}$\\
\hline
\\
\end{tabular}

The stability analysis at region $S_3$ was completed recently by Ammari et \textit{al.} \cite{ALS23}, by proving a  new result due to \cite{BCT16}. Finally, note that regularity of more general or related systems (without delay) was taken into account since the 80s \cite{ChTr89, ChTr90, LiYo98, LaTr98}. See also the recent works \cite{KLS21} and references therein. 

\medskip

The case $(\tau>0, a>0)$ was considered in \cite{ASS24} for stability, we have proved that the presence of the damping terms $aAu^\prime(t)$ and $aA^\alpha\theta(t)$  allowed us to recover well-posedness and stability results obtained for no delayed corresponding systems. Moreover, the damping $aAu^\prime(t)$ is so strong that it makes the system (\ref{s0}) exponentially stable even for $(\beta,\alpha)$ in region $S_1\cup S_2$:
\begin{theorem} \cite{ASS24}\label{th1.2}

\begin{enumerate}
\item Assume that $a\geq\tau$. Then for  $\xi \geq \frac{2\tau}{a}$,  the $\mathcal{C}_0$-semigroup $e^{\mathcal{A}_{\alpha,\beta}t}$ associated to system (\ref{s0}) is exponentially stable in the region
$$Q:=S\cup S_1\cup 
S_2 = \{(\beta,\alpha)\in[0,1]\times [0,1]\mid 2\beta-\alpha\leq 1\}.$$
\item Assume that $a>\kappa$. Then for $\xi\in\mathring{\mathcal{J}}_{a,\kappa,\tau}:=\big]\tau\big(a-\sqrt{a^2-\kappa^2}\big),\tau\big(a+\sqrt{a^2-\kappa^2}\big)\big[$,   the semigroup $e^{t\mathcal{A}_{\alpha,\beta}}$ associated to system (\ref{s1}) has the following stability properties:

(i) In $S$, it is exponentially stable; 

(ii) In $S_1 $, it is polynomially stable of order $\frac{1}{2(\alpha-2\beta)}$;

(iii) In $S_2$, it is polynomially stable of order $\frac{1}{2-2(\alpha+2\beta)}$.

\end{enumerate}
\end{theorem}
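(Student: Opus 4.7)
The plan is to combine the two standard spectral criteria for decay of a $C_0$-semigroup on a Hilbert space: the Gearhart--Prüss--Huang theorem for the exponential statements in part~1 and part~2(i), and the Borichev--Tomilov theorem for the polynomial rates in parts~2(ii)--(iii). Everything reduces to showing, in each regime, (a) $i\mathbb{R}\subset\rho(\mathcal{A}_{\alpha,\beta})$, and (b) an asymptotic bound $\|(i\lambda-\mathcal{A}_{\alpha,\beta})^{-1}\|_{\mathcal{L}(\mathcal{H})}=O(|\lambda|^s)$ on the imaginary axis, with $s=0$ in the exponential regime, $s=2(\alpha-2\beta)$ in $S_1$, and $s=2-2(\alpha+2\beta)$ in $S_2$.

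The starting point is the dissipativity computation: for $U=(u,v,\theta,z)^\top\in D(\mathcal{A}_{\alpha,\beta})$, expand $\operatorname{Re}(\mathcal{A}_{\alpha,\beta}U,U)_{\mathcal{H}}$ by moving fractional powers of $A$ using self-adjointness and integrating by parts in $\rho$ with the boundary condition $z(0)=A^{1/2}u$ for~(\ref{s0}) (resp.\ $z(0)=A^{\alpha/2}\theta$ for~(\ref{s1})). The coupling cross terms $(A^\beta\theta,v)_H$ and $(A^\beta v,\theta)_H$ are complex conjugates and cancel in the real part, leaving a quadratic form involving the dissipative contributions $-a\|A^{1/2}v\|_H^2$ and $-\|A^{\alpha/2}\theta\|_H^2$, a cross term $-\operatorname{Re}(z(1),A^{1/2}v)_H$, and boundary contributions $\pm\frac{\xi}{2\tau}\|z(1)\|_H^2$, $\pm\frac{\xi}{2\tau}\|z(0)\|_H^2$ produced by the $\rho$-integration. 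A Young inequality on the cross term makes the whole form non-positive precisely when $a\ge\tau$ and $\xi\ge 2\tau/a$ for~(\ref{s0}), and---after the analogous analysis in which $-a\|A^{\alpha/2}\theta\|_H^2$ competes with $\kappa\operatorname{Re}(z(1),A^{\alpha/2}\theta)_H$---when $\xi^2-2\tau a\xi+\kappa^2\tau^2<0$, i.e.\ $a>\kappa$ and $\xi\in\mathring{\mathcal{J}}_{a,\kappa,\tau}$ for~(\ref{s1}). This is exactly how the stated parameter intervals arise.

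To obtain (a), suppose $\mathcal{A}_{\alpha,\beta}U=i\lambda U$. The transport equation $-\tau^{-1}z_\rho=i\lambda z$ integrates to $z(\rho)=e^{-i\lambda\tau\rho}z(0)$, so $z(1)=e^{-i\lambda\tau}z(0)$. Taking the real part of $(i\lambda U,U)_{\mathcal{H}}=0$ and substituting into the dissipativity identity forces $A^{1/2}v=A^{\alpha/2}\theta=0$ and $z(1)=0$ under the parameter hypotheses, and plugging back into the component equations gives $U=0$; the case $\lambda=0$ is handled directly using the strict positivity of $A$. For (b), argue by contradiction: assume sequences $(\lambda_n,U_n)$ with $\|U_n\|_{\mathcal{H}}=1$, $|\lambda_n|\to\infty$ and $F_n:=|\lambda_n|^{s}(i\lambda_n-\mathcal{A}_{\alpha,\beta})U_n\to 0$ in $\mathcal{H}$. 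Taking $\operatorname{Re}(F_n,U_n)_{\mathcal{H}}\to 0$ and using the dissipativity identity extracts smallness of $\|A^{1/2}v_n\|$, $\|A^{\alpha/2}\theta_n\|$ and $\|z_n(1)\|$ at rate $|\lambda_n|^{-s/2}$. Then read off the four component equations of $(i\lambda_n-\mathcal{A}_{\alpha,\beta})U_n=|\lambda_n|^{-s}F_n$, invert the transport block explicitly in terms of $z_n(0)$, and use the spectral decomposition of $A$ (splitting modes $\mu$ small and large with respect to $|\lambda_n|^\gamma$ for an optimal $\gamma$) together with interpolation to upgrade these partial bounds to $\|v_n\|,\|\theta_n\|,\|A^{1/2}u_n\|,\|z_n\|_{L^2}\to 0$, contradicting $\|U_n\|_{\mathcal{H}}=1$.

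The main obstacle is the sharpness of the exponents $s=2(\alpha-2\beta)$ and $s=2-2(\alpha+2\beta)$ in the polynomial regime. They arise from interpolating fractional norms of $A$ against the frequency distribution of $U_n$, and identifying the correct power requires locating the ``dangerous'' block of the coupled system: in $S_1$, where $2\beta<\alpha$, the coupling $A^\beta$ is too weak to transmit the parabolic dissipation back to the wave displacement; in $S_2$, where $\alpha+2\beta<1$, the operator $A^\alpha$ is itself too weak at low-frequency modes of $A$. Matching the spectral decomposition of $U_n$ to these two scenarios---and checking that the cancellation used in (a) survives on every spectral subspace of $A$ uniformly---is the technical heart of the argument.
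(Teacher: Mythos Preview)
The paper does not prove this theorem at all: it is quoted verbatim from \cite{ASS24} and no argument is reproduced here. So there is no ``paper's own proof'' to compare your proposal against; the present article merely invokes the result in order to use Lemma~\ref{cnc} (which requires exponential stability) and Remarks~\ref{rms}.

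That said, your outline is the standard route and almost certainly coincides with what \cite{ASS24} does: dissipativity via a Young-type inequality on the boundary/cross terms to identify the admissible $(a,\tau,\xi)$ (your quadratic $\xi^2-2a\tau\xi+\kappa^2\tau^2<0$ indeed gives exactly $\mathring{\mathcal{J}}_{a,\kappa,\tau}$), then Gearhart--Pr\"uss--Huang for the exponential case and Borichev--Tomilov for the polynomial rates, each proved by a contradiction argument on the resolvent. Two caveats. First, what you wrote is a plan, not a proof: the genuinely nontrivial step---deriving the sharp exponents $s=2(\alpha-2\beta)$ and $s=2-2(\alpha+2\beta)$ from the frequency-localized estimates---is named but not carried out, and in practice this is where all the work lies. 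Second, in part~1 the system~(\ref{s0}) carries the strong Kelvin--Voigt damping $aAu'$, so the mechanism is different from the undamped $\alpha$--$\beta$ system: exponential stability holds on all of $Q$ (including $S_1\cup S_2$) because the dissipativity already controls $\|A^{1/2}v\|$ directly, not through the thermal coupling. Your sketch should make that distinction explicit rather than treating parts~1 and~2 symmetrically.
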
 

Our main question in the present work, is whether damping terms also guarantee others qualitative properties (such as analyticity or differentiability) as in the case of non delayed corresponding systems. The answer is negative, Moreover, we will prove that $e^{t\mathcal{A}_{\alpha,\beta}}$ is not even immediately norm-continuous in all the region $Q$ in the case of system (\ref{s0}).
\begin{definition}
 A $C_0$-semigroup $T(t):=e^{t\mathcal{A}}$ on a Hilbert space $\mathcal{H}_1$ is said to be \textit{immediately norm-continuous} if the function $$t\mapsto T(t)$$ is norm-continuous  from $(0,\infty)$ into $\mathcal{L}(\mathcal{H}_1)$.
\end{definition}
Note that if a semigroup is differentiable then it is immediately norm-continuous.

\medskip

In this work we will use the following characterization of an immediately norm-continuous semigroup in term of the resolvent \cite{Puh92, ElEn96, EnNa00}.
 
\begin{lemma}\label{cnc}
Let $\mathcal{A}$ be the generator of a strongly continuous, exponentially stable semigroup $(T(t))_{t\geq 0}$ on a Hilbert space $\mathcal{H}_1$ . Then $(T(t))_{t\geq 0}$ is immediately  norm-continuous if and only if
\begin{equation}\label{nc}
\lim_{\lambda \in \mathbb{R},\,|\lambda|\rightarrow\infty}||(i\lambda I - A)^{-1}||_{\mathcal{L}(\mathcal{H}_1)}=0.
\end{equation}
\end{lemma}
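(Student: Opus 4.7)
The plan is to establish the two implications separately, exploiting the Laplace--Fourier duality between the semigroup and the resolvent, and invoking the Hilbert space structure only in the harder direction.

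For the ($\Rightarrow$) direction I would begin from the Laplace representation of the resolvent on the imaginary axis, valid by exponential stability:
$$(i\lambda I - \mathcal{A})^{-1} = \int_0^{\infty} e^{-i\lambda t}\, T(t)\, dt.$$
Given $\varepsilon > 0$, exponential decay $\|T(t)\| \leq K e^{-\omega t}$ lets me choose $M$ so large that $\big\|\int_M^{\infty} e^{-i\lambda t}\,T(t)\, dt\big\| \leq \varepsilon/3$ uniformly in $\lambda$, while strong continuity and local boundedness of $T$ at $0$ provide $\delta > 0$ so small that $\big\|\int_0^{\delta} e^{-i\lambda t}\,T(t)\, dt\big\| \leq \varepsilon/3$. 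On the compact interval $[\delta, M]$ the map $t \mapsto T(t)$ is norm-continuous by hypothesis, hence uniformly approximable by operator-valued step functions; for each such step function the Fourier-type integral is a finite sum of terms $(e^{-i\lambda b}-e^{-i\lambda a})/(-i\lambda)$ times a constant operator, whose norm vanishes as $|\lambda|\to\infty$. This operator-valued Riemann--Lebesgue lemma on a compact interval, combined with the two tail estimates, yields the required decay.

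For the ($\Leftarrow$) direction, which is the technically delicate one, the task is to convert the norm decay of the resolvent along the imaginary axis into norm-continuity of $T$ on $(0,\infty)$. Fix $t_0 > 0$ and study $T(t) - T(t_0)$ for $t$ close to $t_0$. Using the inverse Laplace representation, shifted onto the imaginary axis by exponential stability and regularised by a factor such as $(I - \mathcal{A})^{-2}$ or a Gaussian multiplier $e^{-\varepsilon \lambda^2}$ to ensure absolute convergence, one can express this difference as an operator-valued integral along $i\mathbb{R}$. Splitting at $|\lambda| = R$, the low-frequency contribution depends continuously on $t$ in operator norm (it is the integral of a bounded operator-valued map over a compact set), whereas the high-frequency tail is controlled, via Plancherel's identity for $\mathcal{H}_1$-valued $L^2$ functions together with the hypothesis $\|(i\lambda I - \mathcal{A})^{-1}\| \to 0$, by a quantity that is uniform in $t$ near $t_0$ and vanishes as $R \to \infty$. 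Letting first $R \to \infty$ and then $t \to t_0$ delivers norm-continuity at $t_0$.

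The main obstacle is precisely this tail estimate: one must translate a pointwise decay of the resolvent on the imaginary axis into a \emph{uniform} operator-norm bound on the tail of an inverse Fourier-type integral. This is where the Hilbert space hypothesis becomes indispensable, since in a general Banach space the same resolvent decay only yields eventual norm-continuity on some $(t_0,\infty)$, not immediate norm-continuity. A self-contained completion proceeds by regularising the Bromwich integral with the Gaussian multiplier $e^{-\varepsilon \lambda^2}$, applying Plancherel to obtain a norm bound independent of $t$, and then carefully passing to the limit $\varepsilon \to 0^+$ uniformly in $t$ near $t_0$; alternatively one simply quotes the classical theorems of El Mennaoui--Engel \cite{ElEn96} or Puhl \cite{Puh92}.
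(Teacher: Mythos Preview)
The paper does not prove this lemma at all: it is stated as a known characterization and attributed to \cite{Puh92, ElEn96, EnNa00} without further argument. Your proposal therefore supplies strictly more than the paper itself, and your closing remark---that ``alternatively one simply quotes the classical theorems of El Mennaoui--Engel \cite{ElEn96} or Puhl \cite{Puh92}''---is precisely what the paper does.

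As for the sketch itself: the ($\Rightarrow$) direction is essentially complete and correct. The ($\Leftarrow$) direction identifies the right ingredients (inverse Laplace representation along $i\mathbb{R}$, frequency splitting, Plancherel in the Hilbert setting), but the key step is left genuinely vague. The claim that the high-frequency tail is controlled ``via Plancherel's identity \dots\ by a quantity that is uniform in $t$ near $t_0$ and vanishes as $R\to\infty$'' is exactly the heart of the matter, and your sentence asserts rather than proves it: Plancherel bounds $L^2$-norms of orbits $t\mapsto T(t)x$, not operator norms of $T(t)-T(t_0)$ directly, and the passage from the former to the latter (typically via a convolution/multiplier representation of $T(t)-T(t_0)$ combined with a uniform bound on $\sup_{|\lambda|>R}\|R(i\lambda,\mathcal{A})\|$) needs to be written out. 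The Gaussian regularisation you mention is one workable route, but the uniform-in-$t$ limit $\varepsilon\to 0^+$ is again nontrivial. So: your proposal is compatible with the paper's treatment and points in the right direction, but the ($\Leftarrow$) argument as written is a plan rather than a proof.
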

Such characterization is similar to those done for exponential and polynomial stability \cite{Gea78, Pruss, Hua85, BoTo10}), or analyticity \cite[Chap. 1, p. 5]{LiZh99}, of Gevrey class $\delta>0$ \cite{Tay89} and differentiability  \cite[Chap. 2, p. 57]{Paz83}, of $\mathcal{C}_0$-semigroups.

\begin{remarks}\label{rms}
\begin{enumerate}
\item It can be proved that for a $C_0$-semigroup $T(t):=e^{t\mathcal{A}}$ on a Hilbert space $\mathcal{H}_1$ one has the following implications:
$$\text{analytic}\;\;\Longrightarrow\;\; \text{differentiable}\;\;\Longrightarrow\;\;\text{immediately norm-continuous}.$$
\item If  $e^{t\mathcal{A}}$ is bounded and $\mathbf{i}\mathbb{R}\subset \rho(\mathcal{A})$ then we have the following implication for $e^{t\mathcal{A}}$:
$$\text{differentiable}\;\;\Longrightarrow\;\;\text{exponentially stable}.$$
\end{enumerate}

\end{remarks}

The paper is organized as follows. In the second section we prove that the abstract thermoelastic system is not immediately norm-continuous when the delay is present at the hyperboloc part (\ref{s0}), and in the third section we prove that the system is non differentiable if the delay is present at the parabolic part (\ref{s1}). we end each section by some related systems or applications.

\medskip

In the sequel,  $\fbox{$(\beta,\alpha)\in Q:=\{(\beta,\alpha)\in[0,1]\times [0,1]\mid 2\beta-\alpha\leq 1\}.$}$

\section{Delay at the parabolic part (system (\ref{s0}))}

The main result of this section is to prove that the semigroup $e^{\mathcal{A}_{\alpha,\beta}t}$ corresponding to system (\ref{s0}) is not immediately norm-continuous (and so non differentiable).

\subsection{{Lack of immediate norm-continuity of semigroup associated to (\ref{s0})}}

 \begin{theorem}\rm\label{theorem21}
For $a\geq\tau$ and even if
$\ds\xi\geq\frac{2\tau}{a}$, the semigroup $e^{\mathcal{A}_{\alpha,\beta}t}$ is not immediately norm-continuous in all the region $Q$. In particular $e^{\mathcal{A}_{\alpha,\beta}t}$ is not differentiable for $(\beta,\alpha)\in Q$.
\end{theorem}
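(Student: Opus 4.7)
The plan is to apply Lemma \ref{cnc}: since the hypotheses $a \geq \tau$ and $\xi \geq 2\tau/a$ yield exponential stability via Theorem \ref{th1.2}, immediate norm-continuity would be equivalent to the decay $\|(i\lambda I - \mathcal{A}_{\alpha,\beta})^{-1}\|_{\mathcal{L}(\mathcal{H})}\to 0$ as $|\lambda|\to\infty$. To disprove this, it suffices to exhibit, for an arbitrary sequence $\lambda_n\in\mathbb{R}$ with $|\lambda_n|\to\infty$, a sequence $U_n\in D(\mathcal{A}_{\alpha,\beta})$ with $\|U_n\|_{\mathcal{H}}=1$ such that $\|(i\lambda_n-\mathcal{A}_{\alpha,\beta})U_n\|_{\mathcal{H}}$ stays bounded, since this forces $\|(i\lambda_n-\mathcal{A}_{\alpha,\beta})^{-1}\|_{\mathcal{L}(\mathcal{H})}\geq c>0$.

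The guiding heuristic is that the last block of $\mathcal{A}_{\alpha,\beta}$ is the pure transport operator $-\tau^{-1}\partial_\rho$, and transport-type semigroups typically fail to be norm-continuous because oscillatory data travel without damping. I would therefore make the ansatz
$$U_n=\tfrac{1}{\sqrt{\xi}}\,(0,0,0,z_n)^{\top},\qquad z_n(\rho)=\eta(\rho)\,e^{-i\lambda_n\tau\rho}\,\phi,$$
where $\eta\in C^1([0,1])$ satisfies $\eta(0)=\eta(1)=0$ with $\|\eta\|_{L^2(0,1)}=1$, and $\phi\in H$ is any fixed unit vector. The vanishing of $\eta$ at both endpoints is what allows $U_n$ to sit inside $D(\mathcal{A}_{\alpha,\beta})$: the compatibility condition $z_n(0)=A^{1/2}\cdot 0$ is ensured by $\eta(0)=0$, while $\eta(1)=0$ makes $z_n(1)+aA^{1/2}v-A^{\beta-1/2}\theta=0\in D(A^{1/2})$ trivially; the remaining conditions involve only zero components and are automatic.

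The crucial computation is that on this ansatz the oscillatory phase is annihilated by $i\lambda_n+\tau^{-1}\partial_\rho$ except for the derivative falling on $\eta$; a direct calculation yields
$$(i\lambda_n-\mathcal{A}_{\alpha,\beta})U_n=\tfrac{1}{\sqrt{\xi}}\Bigl(0,\,0,\,0,\,\tfrac{1}{\tau}\eta'(\rho)\,e^{-i\lambda_n\tau\rho}\,\phi\Bigr)^{\top},$$
so that $\|(i\lambda_n-\mathcal{A}_{\alpha,\beta})U_n\|_{\mathcal{H}}=\tau^{-1}\|\eta'\|_{L^2(0,1)}$, a constant independent of $n$ and of $(\beta,\alpha)$. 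Since $\|U_n\|_{\mathcal{H}}=1$, one concludes $\|(i\lambda_n-\mathcal{A}_{\alpha,\beta})^{-1}\|_{\mathcal{L}(\mathcal{H})}\geq \tau/\|\eta'\|_{L^2}>0$, so condition (\ref{nc}) fails and Lemma \ref{cnc} gives the lack of immediate norm-continuity. Non-differentiability then follows from Remark \ref{rms}(1).

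There is no real obstacle here; the argument is genuinely elementary once one observes that all the spectral complexity of $A$ and of the $(\beta,\alpha)$-coupling can be bypassed by testing on purely transport-supported data. The only points meriting care are the domain verification (which is where the choice $\eta(0)=\eta(1)=0$ is essential) and the cancellation of the oscillatory phase in $(i\lambda_n-\mathcal{A}_{\alpha,\beta})U_n$. The uniformity of the construction in $(\beta,\alpha)\in Q$ is automatic, which is precisely why the conclusion holds throughout the whole region $Q$ and why the strong damping $aAu'$ is powerless to restore any norm-regularity of the semigroup.
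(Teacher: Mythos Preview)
Your argument is correct and is genuinely simpler than the paper's. The paper proceeds by taking $F_n=(0,0,0,h_n)^\top$ and \emph{solving} the resolvent equation $(i\lambda_n I-\mathcal{A}_{\alpha,\beta})U_n=F_n$ explicitly along an eigenvector $e_n$ of $A$: one sets $\theta_n=\mu_n^{-p}e_n$, $\lambda_n=\mu_n^q$, propagates through the coupled equations to obtain $u_n,v_n$ and an expression $\Phi_n$ that determines $h_n$, and then tunes the exponents $p,q$ (with a separate case analysis for $\alpha>0$ and $\alpha=0$) so that $\|U_n\|$ and $\|F_n\|$ are both of order one. This makes essential use of the hypothesis that $A$ has eigenvalues $\mu_n\to\infty$.

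Your route bypasses all of this: by choosing $U_n=(0,0,0,\eta(\rho)e^{-i\lambda_n\tau\rho}\phi)^\top$ with $\eta(0)=\eta(1)=0$, you kill every coupling term (the second block sees only $z_n(1)=0$), land squarely in $D(\mathcal{A}_{\alpha,\beta})$, and reduce the computation to the transport identity $(i\lambda_n+\tau^{-1}\partial_\rho)(\eta e^{-i\lambda_n\tau\rho})=\tau^{-1}\eta' e^{-i\lambda_n\tau\rho}$. This yields a resolvent lower bound $\tau/\|\eta'\|_{L^2}$ that is uniform in $n$ and in $(\beta,\alpha)$, with no case splitting and no use of the spectral structure of $A$. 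In that sense your proof is both more elementary and slightly more general: it does not require $A$ to have a sequence of eigenvalues tending to infinity. What the paper's construction buys, on the other hand, is an explicit resolvent solution with nontrivial $u,v,\theta$ components, a template that is reused later (e.g.\ for the related string system and for the parabolic-delay case) where purely transport-supported data may not suffice; but for the statement at hand your argument is cleaner.
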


\begin{proof}
Using Theorem \ref{th1.2} and Lemma \ref{cnc}, we will show that there exists a sequence of positive real numbers $\lambda_n$ and for each $n$, an element $F_n \in \mathcal{H}$  such that $\lim_{n\rightarrow\infty}\lambda_n=\infty,\; ||F_n||=1$ and the solution $U_n$ of
\begin{equation}\label{e1}
(i\lambda_n I - \mathcal{A}_{\beta,\alpha})U_n=F_n
\end{equation}
satisfies
$$\lim_{n\rightarrow\infty}||U_n||=c>0.$$
 

 which would establish the claimed result.

For each $n\in\mathbb{N}$, we choose $F_n$ of the form $F_n=(0,0,0,h_n)^\top$ where $h_n \in L^{2}\big((0,1),H\big) $ will be chosen later.

The solution $U_n=(u_n,v_n,\theta_n,z_n)^\top$ of (\ref{e1}) should satisfies 

\begin{equation}\label{s28}
\mathbf{i}\lambda_nA^{\frac{1}{2}}u_n-A^{\frac{1}{2}}v_n =0 \textrm{ in } H,
\end{equation}

\begin{equation}\label{s29}
\mathbf{i}\lambda_nv_n+A^{\frac{1}{2}}\big(z_n(1)+aA^{\frac{1}{2}}v_n
-A^{\beta-\frac{1}{2}}\theta_n\big)=0 \textrm{ in } H,
\end{equation}

\begin{equation}\label{s30}
\mathbf{i}\lambda_n\theta_n+A^{\frac{\alpha}{2}}\big(A^{\frac{\alpha}{2}}\theta_n+A^{\beta-\frac{\alpha}{2}}v_n\big)=0 \textrm{ in } H,
\end{equation}

\begin{equation}\label{s31}
\mathbf{i}\lambda_nz_n+\frac{1}{\tau}\partial_{\rho}z_n=h_n \textrm{ in }L^2\big((0,1)H\big).
\end{equation}
By integration of the identity (\ref{s31}), we obtain
\begin{equation}\label{s31'}
\ds z_n(\rho)=A^{\frac{1}{2}}u_n e^{-\mathbf{i}\lambda_n\tau\rho}+
\tau e^{-\mathbf{i}\tau\lambda_n\rho}\int_{0}^{\rho}e^{\mathbf{i}\tau\lambda_ns}h_n(s)ds.
\end{equation}
From (\ref{s29}) we deduce that
\begin{equation}\label{s29'}
z_n(1)=-\left( \mathbf{i}\lambda_nA^{-\frac{1}{2}}v_n+aA^{\frac{1}{2}}v_n
-A^{\beta-\frac{1}{2}}\theta_n\right).
\end{equation}
Taking $\rho=1$ in (\ref{s31'}) and comparing the obtained result to (\ref{s29'}) to get
\begin{equation}\label{s31''}
\tau e^{-\mathbf{i}\tau\lambda_n}\int_{0}^{1}e^{\mathbf{i}\tau\lambda_ns}h_n(s)ds=\Phi_n.
\end{equation}
where
\begin{equation}\label{phi}
\Phi_n=-\left(A^{\frac{1}{2}}u_n e^{-\mathbf{i}\tau\lambda_n}+ \mathbf{i}\lambda_nA^{-\frac{1}{2}}v_n+aA^{\frac{1}{2}}v_n
-A^{\beta-\frac{1}{2}}\theta_n\right).
\end{equation}
We choose $$h_n(s):=\frac{1}{\tau}e^{\mathbf{i}\tau\lambda_n(1-s)}\Phi_n$$
which guaranties the equality (\ref{s31''}) and gives
 \begin{equation*}
\ds z_n(\rho)=A^{\frac{1}{2}}u_n e^{-\mathbf{i}\lambda_n\tau\rho}+
\rho e^{\mathbf{i}\tau\lambda_n(1-\rho)}\Phi_n.
\end{equation*}

For each $n\in \mathbb{N}$, let $e_n$ an eigenvector of $A$ with $\|e_n\|=1$ and $Ae_n=\mu_ne_n$, where $\mu_n$ is a sequence of real numbers such that $\lim\limits_{n\rightarrow\infty}\mu_n=\infty$. Let us take
\begin{equation}\label{s31bisbis}
\theta_n=\frac{1}{\mu_n^p}e_n,\;\;\lambda_n=\mu_n^q
\end{equation}
where $p$ and $q$ are real positive numbers to be specified later,  such that $U_n$ and $F_n$ fulfill the desired conditions.

It follows from (\ref{s28})-(\ref{s30}), (\ref{s31bisbis}) and (\ref{phi}) that:
\begin{eqnarray}
v_n&=&\left(i\mu_n^{-\beta-p+q}+\mu_n^{\alpha-\beta-p}\right)e_n, \label{2.11}\\
A^{\frac{1}{2}}u_n&=&\left(\mu_n^{\frac{1}{2}-\beta-p}-i\mu_n^{\frac{1}{2}+\alpha-\beta-p-q} \right)e_n,
\end{eqnarray}
and
\begin{eqnarray}
-\Phi_n&=&\left[ \left( \mu_n^{\frac{1}{2}-\beta-p}-i\mu_n^{\frac{1}{2}+\alpha-\beta-p-q}\right) e^{-i\lambda_n\tau}-\mu_n^{-\frac{1}{2}-\beta-p+2q}+i\mu_n^{-\frac{1}{2}+\alpha-\beta-p+q}\right. \notag\\
&+&\left. ia\mu_n^{\frac{1}{2}-\beta-p+q}+a\mu_n^{\frac{1}{2}+\alpha-\beta-p}-\mu_n^{\beta-\frac{1}{2}-p}\right] e_n.\label{2.14}
\end{eqnarray}

Case 1: $\alpha>0$. Note that $\frac{1}{2}-\beta+\alpha\geq \frac{\alpha}{2}\geq 0$. Then,  choosing $p=\frac{1}{2}-\beta+\alpha$ and $q=\alpha$,  we obtain, using (\ref{2.11})-(\ref{2.14}),
\begin{eqnarray*}
\theta_n&=& o(1),\\ 
v_n&=&\left(i+1\right)\mu_n^{-\frac{1}{2}}e_n=o(1),\\
A^{\frac{1}{2}}u_n&=&\left(1-i\right)\mu_n^{-\alpha}e_n=o(1),
\end{eqnarray*}
\begin{eqnarray*}
-\Phi_n&=&\left(\left(i-1\right)\mu_n^{\alpha-1}+a\left(i+1\right)+o(1)\right) e_n.
\end{eqnarray*}
One checks that $h_n\in L^2\big((0,1)H\big) $, $\|U_n\|\rightarrow c_1>0$, and $\|F_n\|\sim c_2>0$ for some positive constants $c_1$ and $c_2$.\\
Finally, recall that $U_n$ is solution of (\ref{e1}). Thus, the proof is complete by choosing $\frac{F_n}{\|F_n\|}$ instead of $F_n$.

\medskip

Case 2:
 $\alpha=0$. Since $(\beta,\alpha)\in Q$, we have $\beta\leq \frac{1}{2}$. Then we choose $p=\frac{1}{2}-\beta+\delta,\;q=\delta$, where $\delta$ is a small positive number. We obtain the same conclusion.
\end{proof}

\subsection{Application}{ \textit{Thermoelastic plate with delay}}\label{sec1}

Taking $\alpha=\beta=\frac{1}{2}$, $H=L^2(\Omega)$ where $\Omega$ is a smooth open bounded domain in $\mathbb{R}^n$,  and consider  

\begin{equation}\label{exp11}
\left\{
\begin{array}{ll}
u_{tt}(x,t)+\Delta^2u(x,t-\tau)+a \Delta^2u_{t}(x,t)+\Delta \theta(x,t)=0, &
\quad (x,t)\in\Omega\times(0,+\infty), \\
\theta_{t}(x,t)-\Delta\theta(x,t)-\Delta u_{t}(x,t)=0, &
\quad(x,t)\in\Omega\times(0,+\infty), \\
u(x,t)=\Delta u(x,t)=0, & \quad (x,t)\in\partial \Omega\times(0,+\infty), \\
u(x,0)=u^0(x),u_t(x,0)=u^1(x), & \quad t\in(0,\tau), \\
\theta(x,t)=0, & \quad (x,t)\in\partial \Omega\times(0,+\infty), \\
\theta(x,0)=\theta^0(x), &  \\
-\Delta u(x,t)=f_0(x,t), &\quad-\tau\leq t<0, x\in\Omega,
\end{array}
\right.
\end{equation}
where $\tau$ and $a$ are real positive constants.

\medskip

Here, $A^{\frac{1}{2}}=-\Delta$, with domain $D(A^{\frac{1}{2}})=H^2(\Omega)\cap H^1_0(\Omega)$, and $A=-\Delta^2$, with domain $D(A)=H^4(\Omega)\cap H^2_0(\Omega).$

\medskip

The operator $\mathcal{A}_{\frac{1}{2},\frac{1}{2}}$ is given by
\begin{equation*}
\ds\mathcal{A}_{\frac{1}{2},\frac{1}{2}}
\left(
\begin{array}{c}
u \\
v \\
\theta \\
z \\
\end{array}
\right)=
\left(
\begin{array}{c}
v \\
\ds-\Delta\big(-z(1)+a\Delta v\big)-\Delta \theta \\
\Delta\theta+\Delta v \\
\ds-\frac{1}{\tau}z_{\rho} \\
  \end{array}
\right),
\end{equation*}
with domain
\begin{equation*}
D(\mathcal{A}_{\frac{1}{2},\frac{1}{2}})=\left\{
\begin{array}{c}
(u,v,\theta,z)^{\top}\in  \left( H_{0}^{1}\big(\Omega\big)\cap H^{2}\big(\Omega\big)\right)^3 \times H^{1}\big((0,1),L^{2}(\Omega)\big): \\
\ds \quad z(0)=-\Delta u\quad\textrm{ and }\quad z(1)-a\Delta v\in H_{0}^{1}\big(\Omega\big)\cap H^{2}\big(\Omega\big)
\end{array}
\right\},
\end{equation*}
in the Hilbert space
\begin{equation*}
\mathcal{H}=\left( H_{0}^{1}\big(\Omega\big)\cap H^{2}\big(\Omega\big)\right)\times
L^{2}\big(\Omega\big)\times L^{2}\big(\Omega\big)\times
L^{2}\big(\Omega\times H^{1}(0,1)\big).
\end{equation*}



 Applying Theorem \ref{theorem21} with
$\ds (\beta,\alpha)=\left(\frac{1}{2},\frac{1}{2}\right)\in Q$,
one has the following result.

\begin{theorem}
Even if $a\geq \tau$ and $\ds\xi\geq\frac{2\tau}{a}$, the semigroup $e^{\mathcal{A}_{\frac{1}{2},\frac{1}{2}}t}$ associated to (\ref{exp11}) is not immediately norm-continuous and then it is not differentiable.
\end{theorem}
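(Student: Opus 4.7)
The plan is to recognize this as a direct corollary of Theorem \ref{theorem21} applied to the concrete realization of the abstract framework with $A = \Delta^2$ under clamped/hinged boundary conditions, so the bulk of the work is the verification that the concrete plate system fits the abstract setting, not a new spectral construction.

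First I would check that the parameter pair lies in the admissible region: with $\alpha = \beta = \tfrac{1}{2}$ one has $2\beta - \alpha = \tfrac{1}{2} \leq 1$, so $(\beta,\alpha) \in Q$. Next I would identify the unbounded operator: taking $A^{1/2} = -\Delta$ on $D(A^{1/2}) = H^2(\Omega) \cap H^1_0(\Omega)$ makes $A^{1/2}$ self-adjoint and positive definite on $H = L^2(\Omega)$, and then $A = \Delta^2$ with $D(A) = H^4(\Omega) \cap H^2_0(\Omega)$ is self-adjoint and positive definite, with $A^\alpha = A^{1/2} = -\Delta$ and $A^{\beta-1/2} = I$, $A^{\beta - \alpha/2} = A^{1/4}$ etc., all defined via functional calculus on eigenfunctions of the Dirichlet Laplacian.

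Then I would verify the spectral hypothesis: since $\Omega$ is a smooth bounded domain, $-\Delta$ with Dirichlet boundary condition admits a sequence of eigenvalues $\nu_n \to \infty$ with $L^2$-normalized eigenfunctions $e_n$, so $A$ has eigenvalues $\mu_n = \nu_n^2 \to \infty$ with the same eigenfunctions, fulfilling the assumption required in Theorem \ref{theorem21}. Having identified the data, I would point out that the abstract system \eqref{s0} written with these choices yields exactly \eqref{exp11}, the operator $\mathcal{A}_{1/2,1/2}$ coincides with the operator displayed above, and the Hilbert space $\mathcal{H}$ with its inner product matches the one specified.

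At that point the conclusion is immediate: Theorem \ref{theorem21} asserts that for any $(\beta,\alpha) \in Q$, regardless of whether $a \geq \tau$ and $\xi \geq 2\tau/a$, the semigroup $e^{\mathcal{A}_{\alpha,\beta}t}$ fails to be immediately norm-continuous, hence fails to be differentiable (using the implication differentiable $\Rightarrow$ immediately norm-continuous recalled after the definition). Applying this with $(\beta,\alpha) = (1/2,1/2)$ yields the claim for $e^{\mathcal{A}_{1/2,1/2}t}$. The only potentially subtle point — and the one I would double-check — is that the sequence $h_n$ constructed in the proof of Theorem \ref{theorem21} (and the associated $\theta_n = \mu_n^{-p} e_n$) belongs to the concrete function spaces in \eqref{exp11}, but since $e_n$ are the Dirichlet Laplacian eigenfunctions, they lie in every $D(A^s)$, so no additional regularity issue arises.
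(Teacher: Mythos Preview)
Your proof is correct and follows exactly the paper's approach: the paper simply notes that $(\beta,\alpha)=(\tfrac{1}{2},\tfrac{1}{2})\in Q$ and that $A^{1/2}=-\Delta$ with the stated domains makes (\ref{exp11}) a concrete instance of the abstract system (\ref{s0}), then invokes Theorem~\ref{theorem21}. Your additional verification of the spectral hypothesis (via Dirichlet Laplacian eigenvalues $\nu_n\to\infty$) and of the regularity of the constructed $e_n$ is more explicit than the paper, but the argument is the same.
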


\subsection{Some related systems}
\subsubsection{Exemple 1}
We consider the following system
 \begin{equation}\label{s0'"}
\left\{
\begin{array}{ll}
u''(t)+ Au(t-\tau)+aAu'(t)-A^\beta\theta(t)=0, & \quad t\in(0,+\infty), \\
\theta'(t)+A^{\alpha}\theta(t)+A^\beta u'(t)=0, & \quad t\in(0,+\infty), \\
u(0)=u_{0}, u'(0)=u_{1}, \theta(0)=\theta_{0}, &  \\
\ds B^{*}u(t-\tau)=\phi(t-\tau), & \quad t\in(0,\tau),
\end{array}
\right.
\end{equation}
where $\tau>0$ is a constant time delay, $a>0$ and $B:D(B) \subset H\rightarrow H$ is a closed densely defined linear operator with $B^*$ is the adjoint of $B$. 
 We suppose that $A=BB^*$ and $B^*$ has the following properties:
 \begin{equation*}
  D(B^{*})= D(A^{\frac{1}{2}}) 
 \end{equation*}
 \begin{equation*}
\exists \, c_1 > 0 \; \hbox{ s.t. } \; \|A^{\frac{1}{2}}v\|_H\leq c_1\|B^{*}v\|_{H},\;\;\forall\
v\in D(B^{*}),
\end{equation*}  
 and 
 \begin{equation*}
 \beta\leq \alpha, \quad \beta\leq \frac{1}{2}.
 \end{equation*}
 Note that such system corresponds to that of reference (2.28) in \cite{ASS24}, with $C=A^\beta$ and satisfies all its hypothesis, in fact one can easily  check that: $$D(A^\alpha)\subset D(A^\beta),\;\;\;D(B^*)\subset  D(A^\beta)\;\;\;$$
 and
 \begin{equation*}
 \exists \, c_2 > 0 \; \hbox{ s.t. } \; \|A^\beta v\|_H\leq c_2\|B^*v\|_{H},\;\;\forall\
v\in D(B^{*}).
\end{equation*}  

So, define
$$\ds z(\rho,t):=B^{*}u(t-\tau\rho),\quad\rho\in(0,1), t>0.$$
then, by taking $U=(u,u',\theta,z)^{\top}$,  problem (\ref{s0'"}) is equivalent to

\begin{equation}\label{s77}
\left\{
\begin{array}{ll}
U'=\mathcal{A}U, & \\
\ds U(0)=\big(u_{0},u_{1},\theta_{0},\phi(-\tau.)\big)^{\top},
\end{array}
\right.
\end{equation}
where the operator $\mathcal{A}$ is defined by
$$\mathcal{A}
\left(
\begin{array}{c}
u \\
v \\
\theta \\
z \\
\end{array}
\right)=
\left(
\begin{array}{c}
v \\
\ds-B\big(z(1)+aB^*v\big)+A^\beta\theta \\
-A^{\alpha}\theta-A^{\beta} v \\
\ds-\frac{1}{\tau}z_{\rho} \\
  \end{array}
\right),$$
with domain
$$D(\mathcal{A})=\left\{
\begin{array}{c}
(u,v,\theta,z)^{\top}\in D(B^*)\times D(B^*)
\times D(A^{\alpha})\times H^{1}\big((0,1),H\big): \\
\ds z(0)=B^*u\quad\textrm{ and }\quad aB^*v+z(1)\in D(B)
\end{array}
\right\},$$
in the Hilbert space
$$\mathcal{H}=D(B^*)\times H\times H\times L^{2}\big((0,1),H\big),$$
equipped with the scalar product
$$\ds\big((u,v,\theta,z)^\top,(u_1,v_1,\theta_1,z_1)^\top\big)_{\mathcal{H}}
=\big(B^*u,B^*u_1\big)_{H}
+(v,v_1)_{H}+(\theta,\theta_1)_{H}+\xi\int_{0}^{1}(z,z_1)_{H}d\rho,$$
where $\xi>0$ is a positive constant to be specified.

Recall that, we have proved in \cite{ASS24} that, for  $a\geq \tau$ and $\ds\xi\geq\frac{2\tau}{a}$ the operator $\mathcal{A}$ generates a $\mathcal{C}_0$-semigroup on $\mathcal{H}$, which is exponentially stable. So, 
using Lemma \ref{cnc}, we deduce as in the initial system:
\medskip
\begin{theorem}\label{rel1}
Even if $a\geq \tau$ and $\ds\xi\geq\frac{2\tau}{a}$, the semigroup associated to system (\ref{s77}) is  not immediately norm-continuous, then it is not differentiable.
\end{theorem}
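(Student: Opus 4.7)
The plan is to run the same resolvent-level argument used in the proof of Theorem \ref{theorem21}, using Lemma \ref{cnc}: since the semigroup is known to be exponentially stable under the hypotheses $a\geq\tau$, $\xi\geq 2\tau/a$ (proved in \cite{ASS24}), it suffices to exhibit a sequence $\lambda_n\to\infty$ in $\mathbb R$ and $F_n\in\mathcal H$ with $\|F_n\|=1$ such that the solutions $U_n=(u_n,v_n,\theta_n,z_n)^\top$ of $(\mathbf{i}\lambda_n I-\mathcal A)U_n=F_n$ satisfy $\liminf_n\|U_n\|>0$. As before, I would search for $F_n$ of the form $(0,0,0,h_n)^\top$, which turns the resolvent system into the analogues of (\ref{s28})--(\ref{s31}) but with $A^{1/2}$ replaced by $B$ in the second component and by $B^*$ in the boundary condition $z_n(0)=B^*u_n$.

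The key observation making the original calculation portable is a singular-vector pair for $(B,B^*)$. Let $e_n$ be a unit eigenvector of $A=BB^*$ with $Ae_n=\mu_ne_n$, $\mu_n\to\infty$; set $f_n=B^*e_n/\sqrt{\mu_n}$, so that $B^*e_n=\sqrt{\mu_n}f_n$ and $Bf_n=\sqrt{\mu_n}e_n$, and, since $e_n\in D(A)$, one has $f_n\in D(B)$. I would make the ansatz $\theta_n=\mu_n^{-p}e_n$ and $\lambda_n=\mu_n^q$, exactly as in (\ref{s31bisbis}). Equation (\ref{s30}) determines $v_n=\eta_n e_n$ as a linear combination of the two powers of $\mu_n$ appearing in (\ref{2.11}); (\ref{s28}) then gives $u_n=v_n/(\mathbf{i}\lambda_n)$, a scalar multiple of $e_n$; and the second resolvent equation (with $A^{1/2}$ replaced by $B$) forces $z_n(1)+aB^*v_n=\gamma_n f_n$ where $\gamma_n\sqrt{\mu_n}$ equals $\mu_n^{\beta-p}-\mathbf{i}\lambda_n\eta_n$, the direct analogue of (\ref{s29'}). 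Choosing $h_n(s)=\tfrac{1}{\tau}e^{\mathbf{i}\tau\lambda_n(1-s)}\Phi_n$ with $\Phi_n$ defined by the analogue of (\ref{phi}) (replace $A^{\frac12}u_n,\;A^{-\frac12}v_n,\;A^{\frac12}v_n$ with the corresponding scalar multiples of $f_n$) reproduces the compatibility identity (\ref{s31''}) and yields $z_n(\rho)=B^*u_n e^{-\mathbf{i}\lambda_n\tau\rho}+\rho e^{\mathbf{i}\tau\lambda_n(1-\rho)}\Phi_n$.

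It remains to pick $p,q$ so that $U_n\in D(\mathcal A)$ and both $\|U_n\|_{\mathcal H}$ and $\|F_n\|_{\mathcal H}$ are bounded above and below by positive constants. Under the hypothesis $\beta\leq\alpha$, $\beta\leq \tfrac12$, I would use the same two cases as in Theorem \ref{theorem21}: for $\alpha>0$ take $p=\tfrac12-\beta+\alpha$, $q=\alpha$, which makes $\theta_n,v_n,B^*u_n$ all $o(1)$ while $-\Phi_n$ has a nonzero limit of order $a(\mathbf{i}+1)f_n$, so that $h_n\in L^2((0,1),H)$ and $\|U_n\|,\|F_n\|$ converge to positive constants; for $\alpha=0$, invoke $\beta\leq \tfrac12$ and take $p=\tfrac12-\beta+\delta$, $q=\delta$ with $\delta>0$ small. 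Membership in $D(\mathcal A)$ reduces to $aB^*v_n+z_n(1)=\gamma_nf_n\in D(B)$, which holds because $f_n\in D(B)$.

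The main obstacle in rigor-checking is precisely the domain issue: in the abstract setting $B$ and $B^*$ need not share the nice functional calculus that $A^{1/2}$ enjoyed, so one must verify componentwise that $u_n,v_n\in D(B^*)$ (trivial, as they are scalar multiples of $e_n\in D(A)\subset D(B^*)$) and that $aB^*v_n+z_n(1)\in D(B)$. Once these regularity checks are done, normalising by $\|F_n\|$ and applying Lemma \ref{cnc} yields the lack of immediate norm-continuity, and hence of differentiability, of $e^{t\mathcal A}$ on $\mathcal H$.
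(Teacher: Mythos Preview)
Your proposal is correct and follows exactly the route the paper indicates (the paper gives no explicit proof, only the remark that one argues ``as in the initial system,'' i.e., as in Theorem \ref{theorem21}). Your introduction of the singular vectors $f_n=B^*e_n/\sqrt{\mu_n}$, satisfying $\|f_n\|=1$, $Bf_n=\sqrt{\mu_n}\,e_n$ and $f_n\in D(B)$, is precisely the device that transports the $A^{1/2}$-computations to the abstract pair $(B,B^*)$, and your domain check $aB^*v_n+z_n(1)=\gamma_n f_n\in D(B)$ closes the argument.
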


\subsubsection{Exemple 2: Thermoelastic string system with delay}
 
\begin{equation}\label{s84""}
\left\{
\begin{array}{ll}
u_{tt}(t,x)-u_{xx}(x,t)-au_{xxt}(x,t-\tau)+\theta_x(x,t)=0, &
\quad (x,t)\in(0,\pi)\times(0,+\infty), \\
\theta_{t}(x,t)-\theta_{xx}(x,t)+u_{xt}(x,t)=0, &
\quad(x,t)\in(0,\pi)\times(0,+\infty), \\
u(0,t)=u(\pi,t)=0, & \quad t\in(0,+\infty), \\
u(x,0)=u^0(x),u_t(x,0)=u^1(x), & \quad t\in(0,\tau), \\
\theta_x(0,t)=\theta_x(\pi,t)=0, & \quad t\in(0,+\infty), \\
\theta(x,0)=\theta^0(x), &  \\
u_x(x,t)=f_0(x,t), &\quad-\tau\leq t<0, x\in (0,\pi).
\end{array}
\right.
\end{equation}

Here, we assume that $\int_\Omega \theta(x,t)dx=0$. Otherwise, we can make the substitution $\tilde{\theta}(x,t)=\theta (x,t)-\frac{1}{\ell}\int_\Omega \theta_0(x)dx,$ in fact $(u,v,z,\theta)$ and $(u,v,z,\tilde{\theta})$ satisfy the same system (\ref{s84""}).

The Hilbert space state will be 
\begin{equation*}
\mathcal{H}=\left\{(f,g,h,p)\in H_{0}^{1}\big(0,\pi\big)\times
L^{2}\big(0,\pi\big)\times L^{2}\big(0,\pi\big)\times
L^{2}\big((0,1), L^2(0,\pi)\big)\mid \int_0^\pi h(x)dx=0\right\}.
\end{equation*}
Define
$$\ds z(\rho,t):=u_x(t-\tau\rho),\quad\rho\in(0,1), t>0.$$
then, by taking $U=(u,u',\theta,z)^{\top}$,  problem (\ref{s84""}) is equivalent to

\begin{equation*}
\left\{
\begin{array}{ll}
U'(t)=\mathcal{A}U(t), \, t > 0, & \\
\ds U(0)=\big(u_{0},u_{1},\theta_{0},f_0(\cdot,-\tau\cdot)\big)^{\top},
\end{array}
\right.
\end{equation*}
where  the operator $\mathcal{A}$ is simply written as follows
\begin{equation*}
\ds\mathcal{A}
\left(
\begin{array}{c}
u \\
v \\
\theta \\
z \\
\end{array}
\right)=
\left(
\begin{array}{c}
v \\
\ds \left( z(1)+av_x\right)_x-\theta_x \\
\theta_{xx}-v_x \\
\ds-\frac{1}{\tau}z_{\rho} \\
  \end{array}
\right),
\end{equation*}
with domain\; $D(\mathcal{A})=$
\begin{equation*}
\left\{
\begin{array}{c}
(u,v,\theta,z)^{\top}\in  H_{0}^{1}\big(0,\pi\big) \times H_{0}^{1}\big(0,\pi\big)
\times  \left(H_{0}^{1}\big(0,\pi\big)\cap H^{2}\big(0,\pi\big)\right) \times H^{1}\big((0,1),L^2(0,\pi)\big): \\
\ds \quad z(0)=u_x,\quad\theta_x(0)=\theta_x(\pi)=0,\quad\textrm{ and }\quad z(1)+av_x\in H^{1}\big(0,\pi\big)
\end{array}
\right\},
\end{equation*}
in the Hilbert space $\mathcal{H}$.

\medskip

 Recall that $\mathcal{A}$ generates, on $\mathcal{H}$, a $\mathcal{C}_0$-semigroup  for $\xi\geq\frac{2\tau}{a}$, which is exponentially stable for $a\geq \tau$ \cite{ASS24, KhSh24}. But $e^{\mathcal{A}t}$ is not differentiable,  or more precisely, we have the following result.

\begin{theorem}
Even if $a\geq \tau$, the semigroup $e^{\mathcal{A}t}$ associated to (\ref{s84""}) is not immediately norm-continuous.
\end{theorem}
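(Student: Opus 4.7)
The plan is to verify the hypothesis of Lemma \ref{cnc}. Since $e^{\mathcal{A}t}$ is exponentially stable for $a\geq\tau$ (see \cite{ASS24, KhSh24}), it suffices to construct a real sequence $\lambda_n\to+\infty$ and data $F_n\in\mathcal{H}$ of unit norm such that the corresponding solutions $U_n$ of $(\mathbf{i}\lambda_n I-\mathcal{A})U_n=F_n$ satisfy $\|U_n\|_{\mathcal{H}}\geq c>0$. Mirroring the proof of Theorem \ref{theorem21}, I would take $F_n=(0,0,0,h_n)^\top$ and build $U_n$ concentrated on a single Fourier mode.

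The mixed boundary conditions suggest using two distinct eigenbases of $-\partial_{xx}$ on $(0,\pi)$: the Dirichlet eigenfunctions $\sin(nx)$ for $u_n,v_n$ and the Neumann (zero-mean) eigenfunctions $\cos(nx)$ for $\theta_n$ and for the spatial profiles of $z_n$ and $h_n$. Setting $u_n=a_n\sin(nx)$, $v_n=b_n\sin(nx)$, $\theta_n=c_n\cos(nx)$, $z_n(\rho,x)=\zeta_n(\rho)\cos(nx)$, $h_n(\rho,x)=\eta_n(\rho)\cos(nx)$, the resolvent equation reduces to a scalar system: the first component yields $b_n=\mathbf{i}\lambda_n a_n$; the third gives $(\mathbf{i}\lambda_n+n^2)c_n+nb_n=0$; and integrating the transport equation produces $\zeta_n(\rho)=a_n n\,e^{-\mathbf{i}\lambda_n\tau\rho}+\tau e^{-\mathbf{i}\lambda_n\tau\rho}\int_0^\rho e^{\mathbf{i}\lambda_n\tau s}\eta_n(s)\,ds$. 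Choosing $\eta_n(s)=\tau^{-1}e^{\mathbf{i}\lambda_n\tau(1-s)}\Phi_n$, exactly as in Theorem \ref{theorem21}, collapses the compatibility condition at $\rho=1$ to a single scalar identity that determines $\Phi_n$ in terms of $a_n,b_n,c_n$.

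This specific system corresponds to the abstract parameters $(\alpha,\beta)=(1,\tfrac12)\in Q$, since $-\theta_{xx}$ plays the role of $A^\alpha$ with $\alpha=1$ while the couplings $\theta_x$ and $v_x$ behave like $A^{1/2}$. In the spirit of Case~1 of Theorem \ref{theorem21} ($p=\tfrac12-\beta+\alpha$, $q=\alpha$), I would set $\lambda_n=n^2$ and $c_n=n^{-2}$. Direct computation then gives $b_n=-(1+\mathbf{i})/n$ and $a_n=(\mathbf{i}-1)/n^3$, so $\|u_{n,x}\|^2$, $\|v_n\|^2$, $\|\theta_n\|^2$ all tend to $0$, while the surviving terms in the expression for $\Phi_n$ yield $\Phi_n\to(a-1)+\mathbf{i}(a+1)\neq 0$. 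Consequently $\zeta_n(\rho)\sim\rho\,e^{\mathbf{i}\lambda_n\tau(1-\rho)}\Phi_n$, and both $\xi\int_0^1\|z_n\|^2\,d\rho$ and $\|F_n\|_{\mathcal{H}}^2$ converge to positive finite limits. After normalizing $F_n$ to unit norm, the ratio $\|U_n\|_{\mathcal{H}}/\|F_n\|_{\mathcal{H}}$ stays bounded below, and Lemma \ref{cnc} forbids immediate norm-continuity.

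The delicate step is the calibration of the exponents: the scaling $\lambda_n=n^2$ and $c_n=n^{-2}$ is essentially the unique one keeping $\Phi_n$ bounded and nonzero in the limit. A weaker decay of $c_n$ forces $\Phi_n$ to grow like a positive power of $n$, so $\|F_n\|$ blows up faster than $\|U_n\|$ and the resolvent-norm lower bound is lost; a stronger decay sends $\|U_n\|$ itself to $0$, with the same effect.
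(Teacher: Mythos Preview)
Your proposal is correct and follows essentially the same route as the paper's own proof: the same resolvent criterion (Lemma~\ref{cnc}), the same ansatz $F_n=(0,0,0,h_n)^\top$, the same Fourier decomposition ($\sin(nx)$ for $u_n,v_n$ and $\cos(nx)$ for $\theta_n,z_n,h_n$), the same scaling $\lambda_n=n^2$, $\theta_n=n^{-2}\cos(nx)$, and the same choice of $h_n$ yielding a nonzero limit for $\Phi_n$. The only differences are cosmetic (an overall sign in $v_n$ and $\Phi_n$) and your added remark identifying the effective abstract exponents $(\alpha,\beta)=(1,\tfrac12)$, which is a helpful heuristic the paper does not spell out.
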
 
\begin{proof}
We use again Theorem \ref{th1.2} and Lemma \ref{cnc}. So we will built, for every $n\in\mathbb{N}$, $\lambda_n>0$ and $F_n \in \mathcal{H}$  such that $\ds \lim_{n\rightarrow\infty}\lambda_n=\infty,\; ||F_n||=1$ and the solution $U_n$ of
\begin{equation}\label{e1r}
(i\lambda_n I - \mathcal{A}_{\beta,\alpha})U_n=F_n
\end{equation}
satisfies
$$\lim_{n\rightarrow\infty}||U_n||=c>0.$$
 

 which would establish the claimed result.

\medskip

For each odd number $n\in\mathbb{N}$, we choose $F_n$ of the form $F_n=(0,0,0,h_n)^\top$ where $h_n \in L^{2}\big((0,1),L^2(0,\pi)\big) $ will be chosen later.

\medskip

The solution $U_n=(u_n,v_n,\theta_n,z_n)^\top$ of (\ref{e1r}) should satisfies 

\begin{equation}\label{s28r}
\mathbf{i}\lambda_nu_{n,x}-v_{n,x} =0 \textrm{ in } L^2(0,\pi),
\end{equation}

\begin{equation}\label{s29r}
\mathbf{i}\lambda_nv_n-\big(z_n(1)+av_{n,x}\big)_x
+\theta_{n,x}=0 \textrm{ in } L^2(0,\pi),
\end{equation}

\begin{equation}\label{s30r}
\mathbf{i}\lambda_n\theta_n-\theta_{n,xx}+v_{n,x}=0 \textrm{ in } L^2(0,\pi),
\end{equation}

\begin{equation}\label{s31r}
\mathbf{i}\lambda_nz_n+\frac{1}{\tau}\partial_{\rho}z_n=h_n \textrm{ in }L^2\big((0,1),L^2(0,\pi)\big).
\end{equation}

By choosing
\begin{equation}\label{sr1}
\theta_n(x)=\frac{1}{n^2}\cos (nx)\;\;\;\;\text{and}\;\;\;\;\lambda_n=n^2,
\end{equation} 
one has $u_n,v_n\in H_0^1(0,\pi)\cap H^2(0,\pi)$ and 
\begin{equation}\label{sr2}
v_n(x)=\frac{1+\mathbf{i}}{n}\sin(nx),\;\;\;\;\;\;u_{n,xx}(x)=\frac{1-\mathbf{i}}{n}\sin(nx).
\end{equation}

\medskip

Now, from (\ref{s30r}) we have
\begin{equation}\label{sr}
z_n(1)_x=\mathbf{i}\lambda_n v_{n}-av_{n,xx}+\theta_{n,x}.
\end{equation}
 
On the other hand,
by integration of the identity (\ref{s31r}), we get
\begin{equation}\label{s31'r}
\ds z_n(\rho)=u_{n,x} e^{-\mathbf{i}\lambda_n\tau\rho}+
\tau e^{-\mathbf{i}\tau\lambda_n\rho}\int_{0}^{\rho}e^{\mathbf{i}\tau\lambda_ns}h_n(s)ds.
\end{equation}
In particular,
\begin{equation}\label{s31'rr}
\ds z_n(1)_x=u_{n,xx} e^{-\mathbf{i}\lambda_n\tau}+
\tau e^{-\mathbf{i}\tau\lambda_n}\left( \int_{0}^{1}e^{\mathbf{i}\tau\lambda_ns}h_n(s)ds\right)_x .
\end{equation}
Hence, by comparing  (\ref{sr}) and (\ref{s31'rr}), we obtain
\begin{equation}
\tau e^{-\mathbf{i}\tau\lambda_n} \int_{0}^{1}e^{\mathbf{i}\tau\lambda_ns}h_n(s)ds=\Phi_n(x)
\end{equation}
where
\begin{equation}\label{phir}
\Phi_n(x):=\mathbf{i}\lambda_n \int_{\frac{\pi}{2}}^xv_{n}(s)ds-av_{n,x}+\theta_{n}-u_{n,x} e^{-\mathbf{i}\lambda_n\tau}
\end{equation} 

We choose $$h_n(s):=\frac{1}{\tau}e^{\mathbf{i}\tau\lambda_n(1-s)}\Phi_n$$
which gives
 \begin{equation*}\label{s31bisr}
\ds z_n(\rho)=u_{n,x} e^{-\mathbf{i}\lambda_n\tau\rho}+
\rho e^{\mathbf{i}\tau\lambda_n(1-\rho)}\Phi_n.
\end{equation*}

It follows from (\ref{sr1}), (\ref{sr2})  and (\ref{phir}) that:

\begin{eqnarray*}
\Phi_n=\left( -\mathbf{i}(1+\mathbf{i})-a(1+\mathbf{i})+\frac{1}{n^2}-\frac{\mathbf{i}(1+\mathbf{i})}{n^2}\right)  \cos(nx)\\
=\left(  -\mathbf{i}(a+1)+1-a+o(1)\right) \cos(nx).
\end{eqnarray*}

One checks that $h_n\in L^2\big((0,1),L^2(0,\pi)\big) $, $U_n$ is actually solution of (\ref{e1r}), $\|U_n\|\rightarrow c_1>0$, and $\|F_n\|\sim c_2>0$ for some positive constants $c_1$ and $c_2$. Thus, the proof is complete by taking $\frac{F_n}{\|F_n\|}$ instead of $F_n$.
\end{proof}

\section{Delay at the parabolic part (system (\ref{s1}))}

In this section we prove essentially that when the delay is present at the parabolic part, the corresponding semigroup is also  non differentiable.

\subsection{Lack of differentiability of semigroup associated to (\ref{s1})}

\begin{theorem}\label{thch2}
Even if $a>\kappa$ and $\xi\in \mathring{\mathcal{J}}_{a,\kappa,\tau}$, the semigroup $e^{t\mathcal{A}_{\alpha,\beta}}$ is not differentiable, in the region $Q$.
\end{theorem}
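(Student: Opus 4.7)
The plan is to argue directly at the level of orbits, by producing $U_0 \in \mathcal{H}$ and $t_0 > 0$ such that $e^{t_0 \mathcal{A}_{\alpha,\beta}} U_0 \notin D(\mathcal{A}_{\alpha,\beta})$. This is enough, since for any $C_0$-semigroup the differentiability of $t \mapsto T(t) U_0$ at $t_0 > 0$ forces $T(t_0) U_0 \in D(\mathcal{A}_{\alpha,\beta})$ (the derivative then being $\mathcal{A}_{\alpha,\beta}\, T(t_0) U_0$ by the semigroup property). Note that this strategy is insensitive to the damping parameters $a, \kappa, \xi$ and to the precise location of $(\beta,\alpha)$ inside $Q$, which matches the ``even if'' phrasing in the statement: the hypotheses $a>\kappa$ and $\xi\in\mathring{\mathcal{J}}_{a,\kappa,\tau}$ only enter through the generation/stability machinery of Theorem \ref{th1.2}.

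The key structural observation is that the last component of $\mathcal{A}_{\alpha,\beta}$ is a pure transport equation: along any orbit of \eqref{s471},
\begin{equation*}
\partial_t z(\rho, t) + \tfrac{1}{\tau}\, \partial_\rho z(\rho, t) = 0, \qquad z(0, t) = A^{\alpha/2} \theta(t), \qquad z(\rho, 0) = z_0(\rho).
\end{equation*}
The method of characteristics yields, for $0 < t < \tau$,
\begin{equation*}
z(\rho, t) = \begin{cases} z_0(\rho - t/\tau), & \rho > t/\tau, \\ A^{\alpha/2}\, \theta(t - \tau\rho), & \rho < t/\tau. \end{cases}
\end{equation*}
I would then pick a unit vector $e \in H$ and a scalar function $\phi \in L^2(0,1) \setminus H^1(0,1)$, set $z_0(\rho) := \phi(\rho)\, e$, and take $U_0 := (0,0,0,z_0)^\top \in \mathcal{H}$. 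For any $t \in (0, \tau)$, the restriction of $z(\cdot, t)$ to the subinterval $(t/\tau, 1)$ equals the translate $\phi(\cdot - t/\tau)\, e$, still in $L^2 \setminus H^1$ by the choice of $\phi$. Consequently $z(\cdot, t) \notin H^1((0,1), H)$, so $e^{t \mathcal{A}_{\alpha,\beta}} U_0 \notin D(\mathcal{A}_{\alpha,\beta})$ for every such $t$, and the semigroup cannot be differentiable.

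The main obstacle is the rigorous justification of the transport formula for the $z$-component of the mild orbit when $U_0 \notin D(\mathcal{A}_{\alpha,\beta})$. I would proceed by density: approximate $U_0$ by a sequence $U_0^k \in D(\mathcal{A}_{\alpha,\beta})$ converging to $U_0$ in $\mathcal{H}$, for which the formula holds classically, and pass to the limit using the strong continuity of $e^{t \mathcal{A}_{\alpha,\beta}}$ together with the continuity of $L^2$-translations in $\rho$. Once this formula is in hand, the loss-of-regularity conclusion is immediate and holds uniformly for every $(\beta, \alpha) \in Q$. An alternative, more computational route — in the spirit of the proof of Theorem \ref{theorem21} — would be to build sequences $\lambda_n \to \infty$ and $F_n \in \mathcal{H}$ with a resolvent bound contradicting the logarithmic-region characterization of differentiable semigroups (\cite[Chap.~2, p.~57]{Paz83}); however, the orbit argument above is both simpler and geometrically transparent, and I expect it to be the natural choice for this section.
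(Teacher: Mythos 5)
Your argument is correct in substance, but it takes a genuinely different route from the paper. The paper proves Theorem \ref{thch2} through the resolvent: it solves $(i\lambda_n-\mathcal{A}_{\alpha,\beta})U_n=F_n$ explicitly along eigenvectors $e_n$ of $A$, with $F_n=(0,0,0,h_n)^\top$ and $h_n$ reverse-engineered from the transport equation so that $\|U_n\|$ stays bounded below while $\|F_n\|\sim c_2$, and then invokes Lemma \ref{cnc} and Remarks \ref{rms} to exclude differentiability. Your orbit argument is more elementary and more robust: since the last component of the generator is pure transport, for $0<t<\tau$ the part of $z(\cdot,t)$ on $(t/\tau,1)$ is just the translate of the initial history $z_0$, so starting from $U_0=(0,0,0,\phi\, e)^\top$ with $\phi\in L^2\setminus H^1$ the orbit never enters $D(\mathcal{A}_{\alpha,\beta})$ on $(0,\tau)$, which kills differentiability; the density justification of the d'Alembert formula on $\{\rho>t/\tau\}$ is routine and, crucially, does not involve the coupling to $\theta$ at all. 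This buys you independence from the stability input: the paper's route leans on Lemma \ref{cnc}, whose hypothesis (exponential stability) holds for system \eqref{s471} only in $S$, not in $S_1\cup S_2$, so the paper has to patch the remaining region via Remarks \ref{rms}(2); your argument is uniform over all of $Q$ and over all parameter values. What it does \emph{not} buy is the stronger conclusion of lack of immediate norm-continuity (the form in which Theorem \ref{theorem21} is stated for the other system), nor any information for $t>\tau$, where the whole of $z(\cdot,t)$ is fed by the boundary trace $A^{\alpha/2}\theta(t-\tau\rho)$ and could in principle regain regularity --- but the theorem as stated only claims non-differentiability on $(0,\infty)$, so failure on $(0,\tau)$ suffices. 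One small fix: ``still in $L^2\setminus H^1$ by the choice of $\phi$'' needs $\phi\notin H^1(0,1-t/\tau)$ for the chosen $t$, and a generic $\phi\in L^2\setminus H^1$ (e.g.\ one singular only at $\rho=1$) may fail this; take $\phi$ that is not $H^1$ near $\rho=0$, say $\phi(\rho)=\rho^{-1/4}$, and the restriction is genuinely non-$H^1$ for every $t\in(0,\tau)$.
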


\begin{proof}
In view of Remarks \ref{rms} and Lemma \ref{cnc} it suffices to show that there exists a sequence of positive real numbers $\lambda_n$ and for each $n$, an element $F_n \in \mathcal{H}$  such that $\lim_{n\rightarrow\infty}\lambda_n=\infty,\; ||F_n||=1$ and the solution $U_n$ of
\begin{equation}\label{e2}
(i\lambda_n I - \mathcal{A}_{\beta,\alpha})U_n=F_n
\end{equation}
satisfies
$$\lim_{n\rightarrow\infty}||U_n||=c>0.$$
 

For each $n\in\mathbb{N}$, we choose $F_n$ of the form $F_n=(0,0,0,h_n)^\top$ where $h_n \in L^{2}\big((0,1),H\big) $ will be chosen later.

The solution $U_n=(u_n,v_n,\theta_n,z_n)^\top$ of (\ref{e1}) should satisfies 
\begin{equation}\label{s62"}
\ds\mathbf{i}\lambda_n A^{\frac{1}{2}}u_n-A^{\frac{1}{2}}v_n=0 \textrm{ in } H,
\end{equation}
\begin{equation}\label{s63"}
\ds\mathbf{i}\lambda_n v_n+A^{\frac{1}{2}}\big(A^{\frac{1}{2}}u_n-A^{\beta-\frac{1}{2}}\theta_n\big)=0 \textrm{ in } H,
\end{equation}
\begin{equation}\label{s64"}
\ds\mathbf{i}\lambda_n\theta_n+\kappa A^{\frac{\alpha}{2}}
\big(z_n(1)+\frac{a}{\kappa}A^{\frac{\alpha}{2}}\theta_n
+\frac{1}{\kappa} A^{\beta-\frac{\alpha}{2}}v_n\big)=0 \textrm{ in } H,
\end{equation}
\begin{equation}\label{s65"}
\ds\mathbf{i}\lambda_nz_n+\frac{1}{\tau}\partial_{\rho}z_n
h_n=0 \textrm{ in }L^2\big((0,1),H\big).
\end{equation}

By integration of the identity (\ref{s65"}), we obtain
\begin{equation}\label{s65""}
\ds z_n(\rho)=A^{\frac{\alpha}{2}}\theta_n e^{-\mathbf{i}\lambda_n\tau\rho}+
\tau e^{-\mathbf{i}\tau\lambda_n\rho}\int_{0}^{\rho}e^{\mathbf{i}\tau\lambda_ns}h_n(s)ds.
\end{equation}
From (\ref{s64"}) we deduce that
\begin{equation}\label{s65"""}
\kappa z_n(1)=-\left( \mathbf{i}\lambda_nA^{-\frac{\alpha}{2}}\theta_n+aA^{\frac{\alpha}{2}}\theta_n
+A^{\beta-\frac{\alpha}{2}}v_n\right).
\end{equation}
Taking $\rho=1$ in (\ref{s65""}) and comparing the obtained result to (\ref{s65"""}) to get
\begin{equation}\label{s65bis}
\kappa\tau e^{-\mathbf{i}\lambda_n\tau}\int_{0}^{1}e^{\mathbf{i}\tau\lambda_ns}h_n(s)ds=\Phi_n.
\end{equation}
where
\begin{equation}\label{phi"}
\Phi_n=-\left( \mathbf{i}\lambda_nA^{-\frac{\alpha}{2}}\theta_n+\left( a+\kappa e^{-\mathbf{i}\tau\lambda_n}\right) A^{\frac{\alpha}{2}}\theta_n
+A^{\beta-\frac{\alpha}{2}}v_n\right).
\end{equation}

We choose $$h_n(s):=\frac{1}{\kappa\tau}e^{\tau\mathbf{i}\lambda_n(1-s)}\Phi_n$$
which guaranties the equality (\ref{s65"""}) and gives
 \begin{equation}\label{s31bis}
\ds z_n(\rho)=A^{\frac{\alpha}{2}}\theta_n  e^{-\mathbf{i}\lambda_n\tau\rho}+
\frac{\rho}{\kappa} e^{\tau\mathbf{i}\lambda_n(1-\rho)}\Phi_n.
\end{equation}
For each $n\in \mathbb{N}$, let $e_n$ an eigenvector of $A$ with $\|e_n\|=1$ and  $Ae_n=\mu_ne_n$, where $\mu_n$ is a sequence of real numbers such that $\lim\limits_{n\rightarrow\infty}\mu_n=\infty$. We take  $$v_n=\frac{1}{\mu_n^p}e_n,\;\;\lambda_n=\mu_n^q$$
where $p$ and $q$ are real positive numbers to be specified later,  such that $U_n$ and $F_n$ fulfill the desired conditions.

It follows from (\ref{s62"})-(\ref{s64"}) and (\ref{phi"}) that:
\begin{eqnarray}
A^{\frac{1}{2}}u_n&=&-i\mu_n^{\frac{1}{2}-p-q}e_n,\label{3.12}\\
\theta_n&=&i\left(\mu_n^{-\beta-p+q}-\mu_n^{1-\beta-p-q}\right)e_n,
\end{eqnarray}
and
\begin{eqnarray}
\Phi_n&=&\left[\mu_n^{-\frac{\alpha}{2}-\beta-p+2q}-\mu_n^{1-\frac{\alpha}{2}-\beta-p}-\mathbf{i}\left( a+\kappa e^{-\tau\mathbf{i}\lambda_n}\right)\left(\mu_n^{\frac{\alpha}{2}-\beta-p+q}-\mu_n^{1+\frac{\alpha}{2}-\beta-p-q}\right)\right.\notag\\
&-&\left. \mu_n^{\beta-\frac{\alpha}{2}-p}\right] e_n.\label{3.15}
\end{eqnarray}

Case 1: $\frac{\alpha}{2}+\frac{1}{2}-\beta> 0$ and $\alpha>0$. Choosing $p=\frac{\alpha}{2}-\beta+1-\delta$ and $q=\delta$, where $\delta>0$, small with  $\delta<\alpha,\frac{1}{2},\frac{1+\alpha}{3}$ and $\delta<1+\alpha-2\beta$,  then (\ref{3.12})-(\ref{3.15}) give
\begin{eqnarray*} 
v_n&=&o(1),\\
A^{\frac{1}{2}}u_n&=&\mu_n^{-\frac{1}{2}-\frac{\alpha}{2}+\beta}e_n=o(1),\\
\theta_n&=&\mu_n^{-\frac{\alpha}{2}-1+2\delta}e_n=o(1),\\
\end{eqnarray*}
\begin{eqnarray*}
\Phi_n&=&\left[\mu_n^{-\alpha-1+3\delta}-\mu_n^{\delta-\alpha}-i\left( a+\kappa e^{-\tau\mathbf{i}\lambda_n}\right)\left(\mu_n^{-1+2\delta}-1\right)
- \mu_n^{\delta-(\alpha+1-2\beta)}\right] e_n\\
&=&\left[i\left( a+\kappa e^{-\tau\mathbf{i}\lambda_n}\right)+o(1)\right]e_n.
\end{eqnarray*}
One checks that $h_n\in L^2\big((0,1),H\big) $, $\|U_n\|\rightarrow c_1>0$, and $\|F_n\|\sim c_2>0$ for some positive constants $c_1$ and $c_2$.

Finally, recall that $U_n$ is solution of (\ref{e1}). Thus, the proof is complete by choosing $\frac{F_n}{\|F_n\|}$ instead of $F_n$.

Case 2: If $\frac{\alpha}{2}+\frac{1}{2}-\beta= 0$ we choose $p=q=\frac{1}{2}$, and if $\alpha=0$ and $\beta<\frac{1}{2}$ we choose $p=1-\beta$ and $q=\delta$ small. Then we obtain the same result as in the first case.
\end{proof}
\subsection{Some related systems}
Suppose that $A^{\alpha}=BB^*$
where $B:D(B) \subset H\rightarrow H$ is a closed densely defined linear unbounded operator. The 
 assumption 
$$B^*\theta(t-\tau)=g_0(t),  \quad t\in(0,\tau)$$
is given  instead of the assumption
$$A^{\frac{\alpha}{2}}\theta(t-\tau)=g_0(t),  \quad t\in(0,\tau).$$
 
Precisely, we consider the following system
 \begin{equation}\label{s1''}
\left\{
\begin{array}{ll}
u''(t)+ Au(t)+A^{\beta}\theta(t)=0, & \quad t\in(0,+\infty), \\
\theta'(t)+\kappa A^\alpha \theta(t-\tau)+aA^\alpha \theta(t)-A^{\beta}u'(t)=0, & \quad t\in(0,+\infty), \\
u(0)=u_{0}, u'(0)=u_{1}, \theta(0)=\theta_{0}, &  \\
\ds B^{*}\theta(t-\tau)=\psi(t-\tau), & \quad t\in(0,\tau).
\end{array}
\right.
\end{equation}
 
 We suppose that  $\beta \leq \frac{1}{2}$ and $$D(B^*)= D(A^{\frac{\alpha}{2}})$$ and $$\|(A^{\frac{\alpha}{2}})v\|_H\leq c\|B^*v\|_{H},\;\;\forall\
v\in D(B^{*})=D(A^{\frac{\alpha}{2}}).$$

Here we take
$$\ds z(\rho,t):=B^{*}\theta(t-\tau\rho),\quad\rho\in(0,1), t>0.$$

Define
$U=(u,u',\theta,z)^{\top}$,
then problem (\ref{s1''}) can be formulated as a first order system of the form

\begin{equation*}
\left\{
\begin{array}{ll}
U'=\mathcal{A}_{\alpha,\beta}U, & \\
\ds U(0)=\big(u_{0},u_{1},\theta_{0},\psi(-\tau \cdot)\big)^{\top},
\end{array}
\right.
\end{equation*}
where the operator $\mathcal{A}_{\alpha,\beta}$ is defined by
$$\mathcal{A}_{\alpha,\beta}
\left(
\begin{array}{c}
u \\
v \\
\theta \\
z \\
\end{array}
\right)=
\left(
\begin{array}{c}
v \\
-A^{\frac{1}{2}}\big( A^{\frac{1}{2}}u-A^{\beta-\frac{1}{2}}\theta\big)  \\
\ds-\kappa B\big(z(1)+\frac{a}
{\kappa}B^*\theta\big)- A^{\beta}v \\
\ds-\frac{1}{\tau}z_{\rho} \\
  \end{array}
\right),$$
with domain
$$D(\mathcal{A}_{\alpha,\beta})=\left\{
\begin{array}{c}
(u,v,\theta,z)^{\top}\in D(A^{\frac{1}{2}})\times D(A^{\frac{1}{2}})
\times D(B^{*})\times H^{1}\big((0,1),H\big):\; z(0)=B^*\theta, \\
\ds   A^{\frac{1}{2}}u-A^{\beta-\frac{1}{2}}\theta\in D(A^{\frac{1}{2}})  \quad\textrm{ and }\quad z(1)+\frac{a}
{\kappa}B^*\theta\in D(B)
\end{array}
\right\}.$$ 

in the Hilbert space
$$\mathcal{H}=D(A^{\frac{1}{2}})\times H\times H\times L^{2}\big((0,1),H\big),$$
equipped with the scalar product
$$\ds\big((u,v,\theta,z)^\top,(u_1,v_1,\theta_1,z_1)^\top\big)_{\mathcal{H}}
=\big(A^{\frac{1}{2}}u,A^{\frac{1}{2}}u_1\big)_{H}
+(v,v_1)_{H}+(\theta,\theta_1)_{H}+\xi\int_{0}^{1}(z,z_1)_{H}d\rho,$$
where $\xi>0$ is a positive constant.

\medskip

We know that For $\xi \in \mathcal{J}_{\alpha,\kappa,\tau}$ and $a\geq \kappa$, the operator $\mathcal{A}_{\alpha,\beta}$ generates a $\mathcal{C}_0$-semigroup of  contractions, which is at least polynomially stable in the region $Q$ for $a>\kappa$ and $\xi\in \mathring{\mathcal{J}}_{a,\kappa,\tau}$ \cite{ASS24}.

We can prove as in the previous case that:  
\begin{theorem}\label{thch1}
The $\mathcal{C}_0$-semigroup $e^{\mathcal{A}_{\alpha,\beta}t}$ is  not differentiable in the region $Q$ even for $\xi\in \mathring{\mathcal{J}}_{a,\kappa,\tau}$, $a>\kappa$.
\end{theorem}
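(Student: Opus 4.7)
The plan is to mirror the proof of Theorem \ref{thch2}, replacing the operator $A^{\frac{\alpha}{2}}$ by $B^*$ throughout the construction. Since the semigroup is at least polynomially stable with $\mathbf{i}\mathbb{R}\subset\rho(\mathcal{A}_{\alpha,\beta})$ under the hypotheses $\xi\in\mathring{\mathcal{J}}_{a,\kappa,\tau}$ and $a>\kappa$, Remarks \ref{rms}(2) and Lemma \ref{cnc} reduce differentiability to the uniform decay $\|(\mathbf{i}\lambda I-\mathcal{A}_{\alpha,\beta})^{-1}\|_{\mathcal{L}(\mathcal{H})}\to 0$ as $|\lambda|\to\infty$. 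I would refute this by producing a sequence $\lambda_n\to+\infty$ and data $F_n=(0,0,0,h_n)^\top\in\mathcal{H}$ with $\|F_n\|_{\mathcal{H}}=1$ whose resolvent images $U_n=(u_n,v_n,\theta_n,z_n)^\top$ satisfy $\liminf_n\|U_n\|_{\mathcal{H}}>0$.

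Next I would resolve the four scalar equations in the same order as in Theorem \ref{thch2}. Integrating the transport equation yields
\[
z_n(\rho)=B^*\theta_n\,e^{-\mathbf{i}\lambda_n\tau\rho}+\tau e^{-\mathbf{i}\tau\lambda_n\rho}\int_0^\rho e^{\mathbf{i}\tau\lambda_n s}h_n(s)\,ds,
\]
while the $\theta$-equation forces $B\bigl(z_n(1)+\tfrac{a}{\kappa}B^*\theta_n\bigr)=-\tfrac{1}{\kappa}(\mathbf{i}\lambda_n\theta_n+A^\beta v_n)$. Since $BB^*=A^\alpha$ is invertible on $D(A^\alpha)$, this can be inverted by setting $z_n(1)+\tfrac{a}{\kappa}B^*\theta_n=-\tfrac{1}{\kappa}B^*A^{-\alpha}(\mathbf{i}\lambda_n\theta_n+A^\beta v_n)$, and this vector automatically lies in $D(B)$ because $A^{-\alpha}H\subset D(A^\alpha)=D(BB^*)$ forces $B^*A^{-\alpha}(\cdot)\in D(B)$. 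Matching the two expressions for $z_n(1)$ pins down a quantity $\Phi_n$ (the analog of (\ref{phi"}) with $B^*$ in place of $A^{\frac{\alpha}{2}}$), and then $h_n(s):=(\kappa\tau)^{-1}e^{\mathbf{i}\tau\lambda_n(1-s)}\Phi_n$ solves the system. Selecting an eigenpair $Ae_n=\mu_ne_n$ and putting $v_n=\mu_n^{-p}e_n$ with the same scaling $(p,q)$ used in the three cases of Theorem \ref{thch2}, the identities for $A^{\frac{1}{2}}u_n$ and $\theta_n$ remain explicit scalar multiples of $e_n$, since powers of $A$ act diagonally on $e_n$.

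The principal obstacle is that $B^*\theta_n$ is not in general parallel to $e_n$, so the closing estimates of $\Phi_n$ cannot be extracted componentwise as before. I would bypass this by working only with $H$-norms: writing $\theta_n=\gamma_n e_n$ for an explicit scalar $\gamma_n$, the relation $BB^*=A^\alpha$ yields
\[
\|B^*\theta_n\|_H^2=|\gamma_n|^2(A^\alpha e_n,e_n)_H=|\gamma_n|^2\mu_n^\alpha,
\]
which matches exactly the scaling of $\|A^{\frac{\alpha}{2}}\theta_n\|_H$ exploited in Theorem \ref{thch2}, while the assumed equivalence $\|A^{\frac{\alpha}{2}}v\|_H\le c\|B^*v\|_H$ on $D(A^{\frac{\alpha}{2}})=D(B^*)$ furnishes compatible lower bounds. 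With these substitutions the three cases treated at the end of the proof of Theorem \ref{thch2} ($\tfrac{\alpha}{2}+\tfrac{1}{2}-\beta>0$ with $\alpha>0$; $\tfrac{\alpha}{2}+\tfrac{1}{2}-\beta=0$; $\alpha=0$ with $\beta<\tfrac{1}{2}$) each give $h_n\in L^2((0,1),H)$, $\|F_n\|_{\mathcal{H}}\sim c_2>0$ and $\|U_n\|_{\mathcal{H}}\to c_1>0$. Renormalizing $F_n$ to unit norm then yields the desired counterexample across all of $Q$.
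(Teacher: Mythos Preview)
Your proposal is correct and follows exactly the route the paper has in mind: the paper's entire proof of Theorem \ref{thch1} is the single clause ``We can prove as in the previous case,'' and you have spelled out precisely how that adaptation goes, with the correct logical chain (differentiable $\Rightarrow$ exponentially stable by Remarks~\ref{rms}(2), hence Lemma~\ref{cnc} applies) and the correct replacement $A^{\alpha/2}\rightsquigarrow B^*$ in the transport integration and in the boundary matching for $z_n(1)$.

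One small simplification: your ``principal obstacle'' is less of an obstacle than you suggest. Since $\theta_n=\gamma_n e_n$ and $v_n=\mu_n^{-p}e_n$, every term entering $\Phi_n$ is of the form $B^*(\text{scalar}\cdot e_n)$, so $\Phi_n$ itself is a scalar multiple of the single vector $B^*e_n$, and likewise $z_n(\rho)$ is a scalar function of $\rho$ times $B^*e_n$. Thus the ``componentwise'' analysis of Theorem~\ref{thch2} survives verbatim, only with the direction $B^*e_n$ replacing $e_n$; the identity $\|B^*e_n\|_H^2=(A^\alpha e_n,e_n)_H=\mu_n^{\alpha}$ then makes every norm coincide exactly with the corresponding one in Theorem~\ref{thch2}. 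You therefore do not need the one-sided inequality $\|A^{\alpha/2}v\|\le c\|B^*v\|$ at this step---the equality coming from $BB^*=A^\alpha$ on eigenvectors already gives both bounds.
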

\begin{exam} 
We consider the one dimensional delayed thermoelastic  beam model in $(0,L)$, $L>0$:
\begin{equation}\label{exp33}
\left\{
\begin{array}{ll}
u_{tt}(t,x)+u_{xxxx}(x,t)-\theta_{xx}(x,t)=0, &
\quad (x,t)\in(0,L)\times(0,+\infty), \\
\theta_{t}(x,t)-\kappa\theta_{xx}(x,t-\tau)-a\theta_{xx}(x,t)+u_{txx}(x,t)=0, &
\quad(x,t)\in(0,L)\times(0,+\infty), \\
u(0,t)=u(L,t)=u_{xx}(0,t)=u_{xx}(L,t)=0, & \quad t\in(0,+\infty), \\
u(x,0)=u^0(x),u_t(x,0)=u^1(x), & \quad t\in(0,\tau), \\
\theta(0,t)=\theta(L,t)=0, & \quad t\in(0,+\infty), \\
\theta(x,0)=\theta^0(x), &  \\
\theta_x(x,t)=g_0(x,t), &\quad-\tau\leq t<0, x\in (0,L),
\end{array}
\right.
\end{equation}
where $L$ and $a$ are real positive constants.

\medskip

It is a concrete example of the related system, with $H=L^2(0,L)$, $A=\frac{\partial^4}{\partial x^4}:D(A)=H_0^2(0,L)\cap H^4(0,L)\rightarrow L^2(0,L)$, $\alpha=\beta=\frac{1}{2}$,   $A^\alpha=A^\beta=A^{\frac{1}{2}}=-\frac{\partial^2}{\partial x^2}:D(A^{\frac{1}{2}})=H_0^1(0,L)\cap H^2(0,L)\rightarrow L^2(0,L)$,  $B=-\frac{\partial}{\partial x}:D(B)=H^1(0,L)\rightarrow L^2(0,L)$, $B^*=\frac{\partial}{\partial x}:D(B^*)=H_0^1(0,L)\rightarrow L^2(0,L)$. We have that  $A^{\alpha}=A^{\frac{1}{2}}=BB^*=$, $D(A^{1/4})=D(B^*)=H_0^1(0,L)$ and $\|(A^{1/4})v\|_H=\|B^*v\|_{H},\;\;\forall\
v\in D(B^{*})=H_0^1(0,L)$.    

\medskip

To simplify writing, denote $V=H_{0}^{1}\big(0,L\big) \cap H^{2}\big(0,L\big)$.  The operator $\mathcal{A}_{\frac{1}{2},\frac{1}{2}}$ is given by
\begin{equation*}
\ds\mathcal{A}_{\frac{1}{2},\frac{1}{2}}
\left(
\begin{array}{c}
u \\
v \\
\theta \\
z \\
\end{array}
\right)=
\left(
\begin{array}{c}
v \\
\ds\left( -u_{xx}+\theta\right)_{xx}  \\
\kappa\big(z(1)+\frac{a}{\kappa}\theta_{x}\big)_x-v_{xx} \\
\ds-\frac{1}{\tau}z_{\rho} \\
  \end{array}
\right),
\end{equation*}
with domain
\begin{equation*}
D(\mathcal{A}_{\frac{1}{2},\frac{1}{2}})=\left\{
\begin{array}{c}
(u,v,\theta,z)^{\top}\in V\times V 
\times  H_{0}^{1}\big(0,L\big) \times H^{1}\big((0,1),L^{2}(0,L)\big): \\
\ds \quad z(0)=\theta_x,\quad -u_{xx}+\theta\in V \quad\textrm{ and }\quad z(1)+\frac{a}{\kappa}\theta_x\in H^{1}\big(0,L\big)
\end{array}
\right\},
\end{equation*}
in the Hilbert space
\begin{equation*}
\mathcal{H}= V \times
L^{2}\big(0,L\big) \times L^{2}\big(0,L\big)\times
H^{1}\big((0,1)\times L^{2}(0,L)\big),
\end{equation*}
In view of Theorem \ref{thch1} and even if  $a> \kappa$ and $\xi \in \mathring{\mathcal{J}}_{a,\kappa,\tau}$ the associated semigroup $e^{\mathcal{A}_{\frac{1}{2},\frac{1}{2}}t}$ is not differentiable.
\end{exam}
\begin{remark}
We can consider the thermoelastc string with delay at the heat \cite{MuKa13}. We find that the associated semigroup (in the appropriate Hilbert space) is not differentiable.
\end{remark}
\begin{remark}
A small modification in the proof of Theorem \ref{thch2} allows us to conclude that the semigroup associated to system (\ref{s1}) or (\ref{s1''}) is not exponentially stable in the Hilbert space $\mathcal{H}$ for $(\beta,\alpha)$ in the region $S_1\cup S_2$.

\end{remark}


\begin{thebibliography}{99}

\bibitem{ADBB93}
C. Abdallah, P. Dorato, J. Benitez-Read and R. Byrne, Delayed positive feedback can stabilize oscillatory system, {\em ACC San Fransisco} (1993), 3016--3107.

\bibitem{Benhassi}
E. M. Ait Benhassi, K. Ammari, S. Boulite and L. Maniar, Feedback stabilization of a class of evolution
equation with delay, {\em J. Evol. Equations}, {\bf 9} (2009), 103--121.


\bibitem{ALS23}
K. Ammari, Z. Liu, and F. Shel, Note on stability of an abstract coupled hyperbolic-parabolic system; singular case, {\em Appl. Math. Lett.,} {\bf 141} (2023), Paper No. 108599, 7 pp.

\bibitem{ANP15}
K. Ammari, S. Nicaise, and C. Pignotti, Stability of
abstract-wave equation with delay and a Kelvin-Voigt
damping, {\em Asymptot. Anal.}, {\bf 95} (2015), 21--38.

\bibitem{ANbook}
K. Ammari and S. Nicaise, {\em Stabilization of elastic systems by collocated feedback,} Lecture notes in 
Mathematics, vol. 2124, Springer, Cham, 2015.

\bibitem{ANP13}
K. Ammari, S. Nicaise and C. Pignotti, Stabilization by switching time-delay,
{\em Asymptotic Analysis}, {\bf 83} (2013), 263--283.

\bibitem{ASS24}
K. Ammari, M. Salhi, and F. Shel, Stability of abstract thermoelastic systems with delay, {\em arXiv:2401.08533}, 2024. 

\bibitem{BaPi01}
A. B\'{a}tkai and S. Piazzera, Semigroups and linear partial differential equations with delay, {\em J. Math. Anal. Appl.} {\bf 264} (2001), 1--20.

\bibitem{Bat04}
C. J. K. Batty, Differentiability and growth bounds of solutions of delay equations, {\em J. Math. Anal. Appl.} {\bf 299} (2004), 133--146.

\bibitem{BCT16}  C. J. K. Batty, R. Chill and Y. Tomilov, Fine scales of decays of operator semigroups, {\em J. Eur. Math. Soc.}, {\bf 18} (2016), 853--929.

\bibitem{BaPi05}
A. B\'{a}tkai and S. Piazzera, {\em Semigroups for Delay Equations},
Research Notes in Mathematics,
10, A.K. Peters, Wellesley MA, 2005.

\bibitem{BoTo10}
A.~Borichev and Y.~Tomilov, Optimal polynomial decay of functions and
  operator semigroups, \emph{Math. Ann.}, \textbf{347} (2010), 455--478.
  
  \bibitem{ChTr89} {S.P. Chen and R. Triggiani,  Proof of extensions of two conjectures on structural damping for
 elastic systems, {\em Pacific J. Math.}, {\bf 136} (1989),  15-55.}

\bibitem{ChTr90}
{S. P. Chen and R. Triggiani, Gevrey class semigroups arising from elastic systems 
with gentle dissipation: the case $0 <\alpha < \frac{1}{2}$, {\em Proc. Am. Math. Soc.} {\bf 110} (1990), 401-415.}

\bibitem{Dat88}
R. Datko, Not all feedback stabilized hyperbolic systems are robust with respect to
small time delays in their feedbacks, {\em SIAM J. Control Optim.}, {\bf 26} (1988), 697--713.


\bibitem{Dat97}
R. Datko, Two examples of ill-posedness with respect to time delays revisited,
{\em IEEE Trans. Automatic Control.}, {\bf 42} (1997), 511--515.

\bibitem{DLP86}
R. Datko, J. Lagnese and M. P. Polis, An example of the effect of time delays in boundary feedback stabilization of wave equations,
{\em SIAM J. Control Optim.}, {\bf 24} (1986), 152--156.

\bibitem{DQR09}
M. Dreher, R. Quintanilla and R. Racke, Ill-posed problems in thermomechanics, {\em Appl. Math. Letters.}, {\bf 22} (2009), 
1374--1379.

\bibitem{ElEn96}
O. El-Mannaoui and K.-J. Engel, Towards a characterization of eventually norm continuous semigroups on
banach spaces, {\em Quaestiones Mathematicae}, {\bf 19} (1996), 
183--190.

\bibitem{EnNa00}
K.-J. Engel and R.~Nagel, \emph{One-parameter semigroups for linear evolution
  equations}, Graduate Texts in Mathematics, vol. 194, Springer-Verlag, New
  York, 2000.

\bibitem{Gea78}
L.~M. Gearhart, Spectral theory for contraction semigroups on {H}ilbert
  space, \emph{Trans. Amer. Math. Soc.}, \textbf{236} (1978), 385--394.

\bibitem{Gug10}
M. Gugat, Boundary feedback stabilization by time delay for one-dimensional wave
equations, {\em IMA J. Math. Control Inform.}, {\bf 27} (2010), 189--203.


\bibitem{Her02} E. Hern\'andez, Regularity of solutions of partial neutral functional differential equations with unbounded delay, {\em Proycciones}, {\bf 21} (2002), 65--95.

\bibitem{HeVa03} H. R. Henriquez and C. H. V\'asqez, Differentiability of solutions of second-order functional differential equations with unbounded delay, {\em J. Math. Anal. Appl.}, {\bf 280} (2003), 284--312.

\bibitem{HLY15} J. Hao, Z. Liu and J. Yong,  Regularity analysis for an abstract system of coupled hyperbolic and parabolic equations, {\em Journal of Differential Equations}, {\bf 259} (2015), 4763--4798.

\bibitem{Hua85}
F.~L. Huang, Characteristic condition for exponential stability of linear
  dynamical systems in {H}ilbert spaces, \emph{Ann. Diff. Eqs.}, \textbf{1} (1985),
  43--56.  

  
  \bibitem{KhSh24}
 S. M. Khatir and F. Shel, \emph{On exponential stability of a delayed
  thermoelastic system}, to appear in proceeding CTIP'23.

\bibitem{LaTr98}
I.~Lasiecka and R. Triggianni, Analyticity , and lack thereof, of thermoelastic semigroups, \emph{ESAIM: Proceedings}, \textbf{4} (1998),
  199--222. 
  
\bibitem{KLS21}
Z.~Kuang, Z.~Liu and . H.~D.~F.~Sare, \emph{Regularity analysis for abstract thermoelastic system with internal term}, COCV, \textbf{27} (2021),
  S24.     


\bibitem{LiYo98}
Z.~Liu and . J.~Yong, Qualitative properties of certain $\mathcal{C}_0$-semigroups arising in elastic systems with various dampings, \emph{Adv. Diff. Eqs.}, \textbf{3} (1998),
  643--686.   

\bibitem{LiZh99}
Z. Liu and S. Zheng, \emph{Semigroups associated with dissipative systems},
  Chapman $\&$ Hall/CRC, 1999.

\bibitem{MuKa13}
M. I. Mustapha and M. Kafini, Exponential decay in thermoelastic systems with internal distributed delay, {\em Palest. J. Math.}, {\bf 2} (2013), 287--299.


\bibitem{NiPi06}
S. Nicaise and C. Pignotti, Stability and instability results of the wave equation with a delay term in the boundary or internal feedbacks, {\em SIAM J. Control Optim.}, {\bf 45} (2006), 
1561--1585.


\bibitem{Paz83}
A.~Pazy, \emph{Semigroup of linear operators and applications to partial
  differential equations}, Springer, New York, 1983.

\bibitem{Pruss}
J. Pr\"{u}ss, {\em Evolutionary Integral Equations and Applications}, Monographs Math., 87,
Birkh\"{a}user, Basel, 1993.

\bibitem{Puh92}
Y. Puhong, Characteristic conditions for a $\mathcal{C}_0$-semigroup with continuity in the uniform operator topology for $t>0$, {\em Proc. Amer. Math. Soc.}, {\bf 116} (1992), 
991--997.

\bibitem{JDM08}
P. M. Jordan, W. Dai and R. E. Mickens, A note on the delayed heat equation:
Instability with respect to initial data, {\em Mech. Res. Comm.}, {\bf 35} (2008), 414--420.


\bibitem{Rac12}
R. Racke, Instability of coupled systems with delay, {\em Commun. Pure Appl. Anal.},
{\bf 11} (2012), 1753--1773.

\bibitem{RiRa96}
J. E. M. Rivera and R. Racke, Large solutions and smoothing properties for nonlinear thermoelastic
systems, {\em J. Differ. Equ.}, {\bf 127} (1996), 454--483.


\bibitem{Tay89}
S.~Taylor, \emph{Gevrey regularity of solutions of evolution equations and boundary controllability}, Gevrey semigroups (Chapter 5), Ph.D Thesis, School of Mathematics, University of Minnesota, 1989.

\end{thebibliography}
\end{document}